\newtheorem{thm}{Theorem}[section]
\newtheorem{lem}[thm]{Lemma}
\newtheorem{conj}[thm]{Conjecture}
\newtheorem{cor}[thm]{Corollary}
\newcommand\numberthis{\addtocounter{equation}{1}\tag{\theequation}}
\DeclareMathOperator{\tr}{tr}
\DeclareMathOperator{\trans}{trans}
\DeclareMathOperator{\SL}{SL}
\DeclareMathOperator{\SU}{SU}
\DeclareMathOperator{\PSL}{PSL}
\DeclareMathOperator{\PSU}{PSU}
\title{Representations of the (-2,3,7)-Pretzel Knot and Orderability of Dehn Surgeries}
\author{Konstantinos Varvarezos}
\begin{document}
\maketitle

\begin{abstract}
We construct a 1-parameter family of $\SL_2(\mathbf{R})$ representations of the pretzel knot $P(-2,3,7)$.  As a consequence, we conclude that Dehn surgeries on this knot are left-orderable for all rational surgery slopes less than 6.  Furthermore, we discuss a family of knots and exhibit similar orderability results for a few other examples.
\end{abstract}

\section{Introduction}
This paper studies the character variety of a certain knot group in view of the relationship between $\widetilde{\PSL_2(\mathbf{R})}$ representations and the orderability of Dehn surgeries on the knot.  This is of interest because of an outstanding conjectured relationship between orderability and L-spaces.

A \textit{left-ordering} on a group $G$ is a total ordering $\prec$ on the elements of $G$ that is invariant under left-multiplication; that is, $g \prec h$ implies $fg \prec fh$ for all $f,g,h \in G$.  A group is said to be \textit{left-orderable} if it is nontrivial and admits a left ordering.  A 3-manifold $M$ is called \textit{orderable} if $\pi_1(M)$ is left-orderable.

If $M$ is a rational homology 3-sphere, then the rank of its Heegaard Floer homology is bounded below by the order of its first (integral) homology group.  $M$ is called an \textit{L-space} if equality holds; that is, if $\mathrm{rk}\big(\widehat{HF}(M)\big) = \left|H_1(M;\mathbf{Z})\right|$.

This work is motivated by the following proposed connection between L-spaces and orderability, first conjectured by Boyer, Gordon, and Watson.

\begin{conj}[\cite{BGW}] \label{conj:LS}
An irreducible rational homology 3-sphere is an L-space if and only if
its fundamental group is not left-orderable.
\end{conj}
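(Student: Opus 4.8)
The plan is to prove the equivalence by routing \emph{both} implications through the existence of a co-orientable taut foliation, thereby establishing the stronger trichotomy in which, for an irreducible rational homology $3$-sphere $M$, the three conditions --- $M$ is not an L-space, $\pi_1(M)$ is left-orderable, and $M$ carries a co-orientable taut foliation --- are mutually equivalent. One of the six resulting arrows is already in hand: by the theorem of Ozsv\'ath and Szab\'o (building on Eliashberg--Thurston), a rational homology $3$-sphere admitting a co-orientable taut foliation has $\widehat{HF}$ of rank strictly greater than $|H_1(M;\mathbf{Z})|$, and so such a foliation precludes $M$ from being an L-space. The contrapositive supplies ``L-space $\Rightarrow$ no taut foliation,'' and the task reduces to closing the remaining implications.

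For the direction ``taut foliation $\Rightarrow$ left-orderable'' I would invoke the universal-circle construction of Thurston, as developed by Calegari and Dunfield: a co-orientable taut foliation on an atoroidal $M$ with infinite $\pi_1$ yields a faithful action of $\pi_1(M)$ on a circle. When this action fixes a point one may cut there to obtain a faithful action on an interval $\cong \mathbf{R}$, hence (for countable groups) a left-ordering; when it does not, one must either force the associated Euler class to vanish or pass to a finite cover and bootstrap back, which is where the argument turns delicate. An alternative, closer to the method of the present paper, is to produce directly a nontrivial representation $\pi_1(M) \to \widetilde{\PSL_2(\mathbf{R})}$ and pull back the left-ordering of $\widetilde{\PSL_2(\mathbf{R})}$. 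The genuinely open converse, ``left-orderable $\Rightarrow$ taut foliation,'' would require thickening a faithful action on $\mathbf{R}$ into a branched surface and then a foliation, for which no general mechanism is known.

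Because a frontal hyperbolic attack is out of reach, the realistic route is a \emph{gluing} strategy along the JSJ decomposition, following the program that settled the conjecture for Seifert fibered spaces (Boyer--Rolfsen--Wiest, Lisca--Stipsicz) and for graph manifolds via bordered Heegaard Floer homology (Boyer--Clay; Hanselman--Rasmussen--Rasmussen--Watson). One would: (i) verify all three conditions directly on each geometric piece; (ii) show that each condition is detected by a boundary invariant --- a set of L-space slopes, of orderable slopes, and of foliation slopes on every JSJ torus --- that behaves predictably under gluing; and (iii) match these slope sets across the decomposition to deduce the global equivalence.

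The main obstacle is unmistakably the atoroidal case. For a hyperbolic $M$ there is at present no way to read off a taut foliation or a left-ordering from the bare failure of the L-space condition, nor to convert left-orderability back into the Floer-theoretic rank bound; the two arrows ``not L-space $\Rightarrow$ taut foliation'' and ``left-orderable $\Rightarrow$ not L-space'' thus remain genuinely open, and it is these that the gluing reduction ultimately throws back onto the irreducible pieces. Accordingly, the contribution of the present paper is not to close these arrows in general but to enlarge the body of supporting evidence: by exhibiting explicit $\widetilde{\PSL_2(\mathbf{R})}$ representations arising from the character variety of $P(-2,3,7)$, it confirms left-orderability for an infinite family of surgeries already known not to be L-spaces, exactly as Conjecture~\ref{conj:LS} predicts.
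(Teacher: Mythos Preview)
The statement you were asked to prove is labeled in the paper as a \emph{conjecture}, attributed to Boyer--Gordon--Watson, and the paper makes no attempt to prove it. It is quoted solely as motivation: the paper's actual contribution is Theorem~\ref{thm:main}, which verifies one direction of the conjecture for a specific infinite family of Dehn fillings on $P(-2,3,7)$. There is therefore no ``paper's own proof'' against which your proposal can be compared.

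To your credit, you appear to recognize this. Your write-up is not really a proof but an accurate survey of the known landscape: the Ozsv\'ath--Szab\'o obstruction gives ``taut foliation $\Rightarrow$ not L-space''; the Calegari--Dunfield universal-circle machinery gives partial information toward ``taut foliation $\Rightarrow$ left-orderable''; the graph-manifold case is settled by the Boyer--Clay and Hanselman--Rasmussen--Rasmussen--Watson gluing results; and the hyperbolic atoroidal case is wide open in both of the directions you name. You also correctly situate the present paper as evidence rather than resolution. But none of this constitutes a proof of Conjecture~\ref{conj:LS}, and you should not present it as one: the implications ``not L-space $\Rightarrow$ left-orderable'' and ``left-orderable $\Rightarrow$ not L-space'' remain genuinely unproven for general irreducible rational homology spheres, exactly as you concede in your final paragraph. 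The honest statement is that the conjecture is open and your proposal outlines why.
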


In \cite{BGW}, this equivalence was shown to hold for all closed, connected, orientable,
geometric three-manifolds that are non-hyperbolic.  Knot groups are of particular interest due to the fact that knot complements are often hyperbolic and also because the L-space surgery interval for an L-space knot is known to be $[2g-1,\infty),$ where $g$ is the Seifert genus of the knot \cite{OSz}.

We shall primarily focus on the $(-2,3,7)$-pretzel knot, which is an L-space knot.  This knot has genus 5, and so if Conjecture \ref{conj:LS} holds, one would expect the orderable surgeries to be precisely from slopes in the interval $(-\infty, 9).$  It has been shown in \cite{Nie} that surgery along any slope outside this interval always yields a non-orderable manifold.  Moreover, Nie also showed that surgery along a slopes in $(-\epsilon,\epsilon)$ yields an orderable manifold for some sufficiently small $\epsilon>0.$  In this work, we improve the result to:
\begin{thm}\label{thm:main}
Let $X$ denote the exterior of the $(-2,3,7)$-pretzel knot.  Then $X(r)$ is orderable for all $r\in (-\infty, 6).$
\end{thm}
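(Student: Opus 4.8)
The plan is to feed the family $\{\rho_t\}$ of $\SL_2(\mathbf{R})$-representations constructed in the previous section into the standard translation-number machinery for $\widetilde{\PSL_2(\mathbf{R})}$-representations. First I would lift: since a knot exterior satisfies $H^2(X;\mathbf{Z})=0$, the Euler obstruction vanishes, so composing $\rho_t$ with $\SL_2(\mathbf{R})\to\PSL_2(\mathbf{R})$ and lifting gives $\widetilde{\rho}_t\colon\pi_1(X)\to\widetilde{\PSL_2(\mathbf{R})}$, depending continuously on $t$. Next I would invoke the orderability criterion of Boyer--Gordon--Watson \cite{BGW} (as used in \cite{Nie}): if $X(r)$ is irreducible and $\pi_1(X(r))$ carries a nontrivial homomorphism to $\widetilde{\PSL_2(\mathbf{R})}$ with non-abelian image, then $X(r)$ is orderable. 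Since $\rho_t$ is irreducible its image is non-abelian, and this persists in any quotient; and $\widetilde{\rho}_t$ factors through $\pi_1(X(r))$ precisely when it kills the peripheral element $\gamma_r=\mu^p\lambda^q$ representing the slope $r=p/q$. Finally, $X(r)$ is irreducible for all rational $r<6$, because the non-hyperbolic surgery slopes on $P(-2,3,7)$ are all at least $16$ (and $X(0)$ is a genus-$5$ surface bundle over the circle, hence irreducible). So Theorem \ref{thm:main} is reduced to: \emph{for every rational $r<6$ there is a parameter $t$ with $\widetilde{\rho}_t(\gamma_r)=1$.}

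The second stage is a peripheral analysis. Using the explicit matrices for $\rho_t$, I would compute $\tr\rho_t(\mu)$ and $\tr\rho_t(\lambda)$ and isolate the interval $I$ of parameters on which $\rho_t(\mu)$ is elliptic, i.e. $\tr\rho_t(\mu)\in(-2,2)$ (here $\rho_t(\mu)\neq\pm I$ automatically, since $\mu$ normally generates $\pi_1(X)$ and $\rho_t$ is irreducible). On $I$ the element $\rho_t(\lambda)$ commutes with a non-central elliptic element, hence lies in the same maximal torus, so $\widetilde{\rho}_t$ carries the peripheral $\mathbf{Z}^2$ into a conjugate of $\widetilde{\mathrm{SO}(2)}\cong\mathbf{R}$, on which the translation number $\widetilde{\mathrm{rot}}$ is a group isomorphism. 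This produces continuous functions $a(t)=\widetilde{\mathrm{rot}}(\widetilde{\rho}_t(\mu))$ and $b(t)=\widetilde{\mathrm{rot}}(\widetilde{\rho}_t(\lambda))$ on $I$, and $\widetilde{\rho}_t(\gamma_{p/q})=1$ if and only if $p\,a(t)+q\,b(t)=0$. (Off $I$ the peripheral subgroup is parabolic or hyperbolic, so no nontrivial $\gamma_r$ can be killed; thus $I$ is exactly the useful range, and $a(t)\neq0$ throughout the interior of $I$.)

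The third stage identifies the detected slopes $r(t)=-b(t)/a(t)$. The functions $a$ and $b$ are each a continuous branch of $\pm\tfrac{1}{\pi}\arccos(\tfrac{1}{2}\tr)$ shifted by an integer, and the integer shift — invisible to the traces — must be nailed down by evaluating at an endpoint of $I$ where $\rho_t$ degenerates to a representation whose rotation numbers can be read off directly (a reducible limit, a parabolic limit where $\tr\rho_t(\mu)\to\pm2$, or an ideal point of the character variety). I would then show that, on a suitable subinterval of $I$, the continuous function $r(t)$ takes values approaching $6$ at one end and values tending to $-\infty$ at the other, so by the intermediate value theorem $\{\,r(t):t\in I\,\}$ contains $(-\infty,6)$; since only rational slopes occur, the Boyer--Gordon--Watson criterion then yields the theorem.

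The main obstacle is this last computation: pinning down the integer parts of $a(t)$ and $b(t)$ by continuation from a base representation, and in particular verifying that the limiting detected slope at the relevant end of the elliptic interval is exactly $6$. Everything else — the lift, the Boyer--Gordon--Watson descent criterion, and irreducibility of the fillings — is routine, and the number $6$ (hence the sharpness of the interval $(-\infty,6)$) is dictated entirely by the endpoint behaviour of the peripheral rotation numbers of the constructed family.
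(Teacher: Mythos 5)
Your outline is, in substance, the paper's own proof: the paper also lifts the $\SU(1,1)\cong\SL_2(\mathbf{R})$ family of Section 3 to $\widetilde{\PSL_2(\mathbf{R})}$, records the peripheral rotation numbers $\bigl(\tfrac{\phi}{\pi},\tfrac{-19\phi+\psi}{\pi}\bigr)$, anchors their integer parts at the reducible endpoint (where $\phi=\theta_0/2$ and $\psi=19\theta_0/2\approx 6\pi+0.53$), uses monotonicity of the trace functions $m(t),\ell(t)$ to force $\phi\to 0$ and $\psi\to 6\pi$ at the parabolic end, and then applies the Culler--Dunfield intersection criterion together with the known exceptional slopes to get orderability. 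The paper simply packages your ``$pa(t)+qb(t)=0$'' condition as the translation extension locus, and the step you flag as the main obstacle --- pinning down the integer shifts and the limiting slope $6$ --- is exactly the content of Lemmas \ref{lem:mono} and \ref{lem:tan} plus the numerical anchor at the root $e^{i\theta_0}$ of the Alexander polynomial; your plan for that step is the right one.

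One concrete flaw in your final step as written: no single continuous branch $r(t)=-b(t)/a(t)$ can both approach $6$ and tend to $-\infty$. With the normalization coming from the reducible endpoint, $a(t)\in(0,\theta_0/2\pi]$ and $b(t)\in[0,6)$ are both nonnegative, so $-b(t)/a(t)$ sweeps only $(-\infty,0]$ as $a\to 0^+$; to detect the slopes in $(0,6)$ you must pass to the integrally shifted lift $(a(t)-1,\,b(t))$ (changing the lift of $\rho_t$ by the generator of $H^1(X;\mathbf{Z})$ shifts $a$ by $1$ and leaves $b$ fixed since $\lambda$ is null-homologous), whose detected slope runs from $0$ to $6$. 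This is precisely the $D_\infty$-symmetry step in the paper's proof, where the path from $(\theta_0/2\pi,0)$ to $(0,6)$ is supplemented by its translate/rotate from $(1-\theta_0/2\pi,0)$ to $(1,-6)$ before intersecting with the lines $L_r$. Easily repaired, but as stated the single-interval intermediate value argument cannot cover all of $(-\infty,6)$.
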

This leaves the slope interval $[6,9)$ as still unverified vis-\`{a}-vis Conjecture \ref{conj:LS}.

The proof of the main theorem follows the strategy developed by Culler and Dunfield in \cite{CD}.  In particular, we compute a certain one-parameter family of $\widetilde{\PSL_2(\mathbf{R})}$-representations of the knot group, and from that, we conclude the existence of a curve on the translation extension locus associated to such representations.  In fact, Culler and Dunfield used numerical methods to produce an image of the translation extension locus associated to the knot $P(-2,3,7)$  (see Figure 3 in \cite{CD}
).  Theorem \ref{thm:main} is precisely the result one expects from that image.


The $(-2,3,7)$-pretzel knot can be viewed as one of a family of \textit{twisted torus knots}.  In particular, consider the $(3,3k+2)$ torus knot with $m$ full twists about a pair of strands (pictured in Figure \ref{fig:knot}), which we denote by $T_{3,3k+2}^m$.  Notice that $T_{3,5}^1$ is the pretzel knot $P(-2,3,7).$  In section \ref{sec:gen}, we consider possible extensions of our result to the family $T_{3,3+2}^1$.

\begin{figure}
\centering
\includegraphics[width=.5\textwidth]{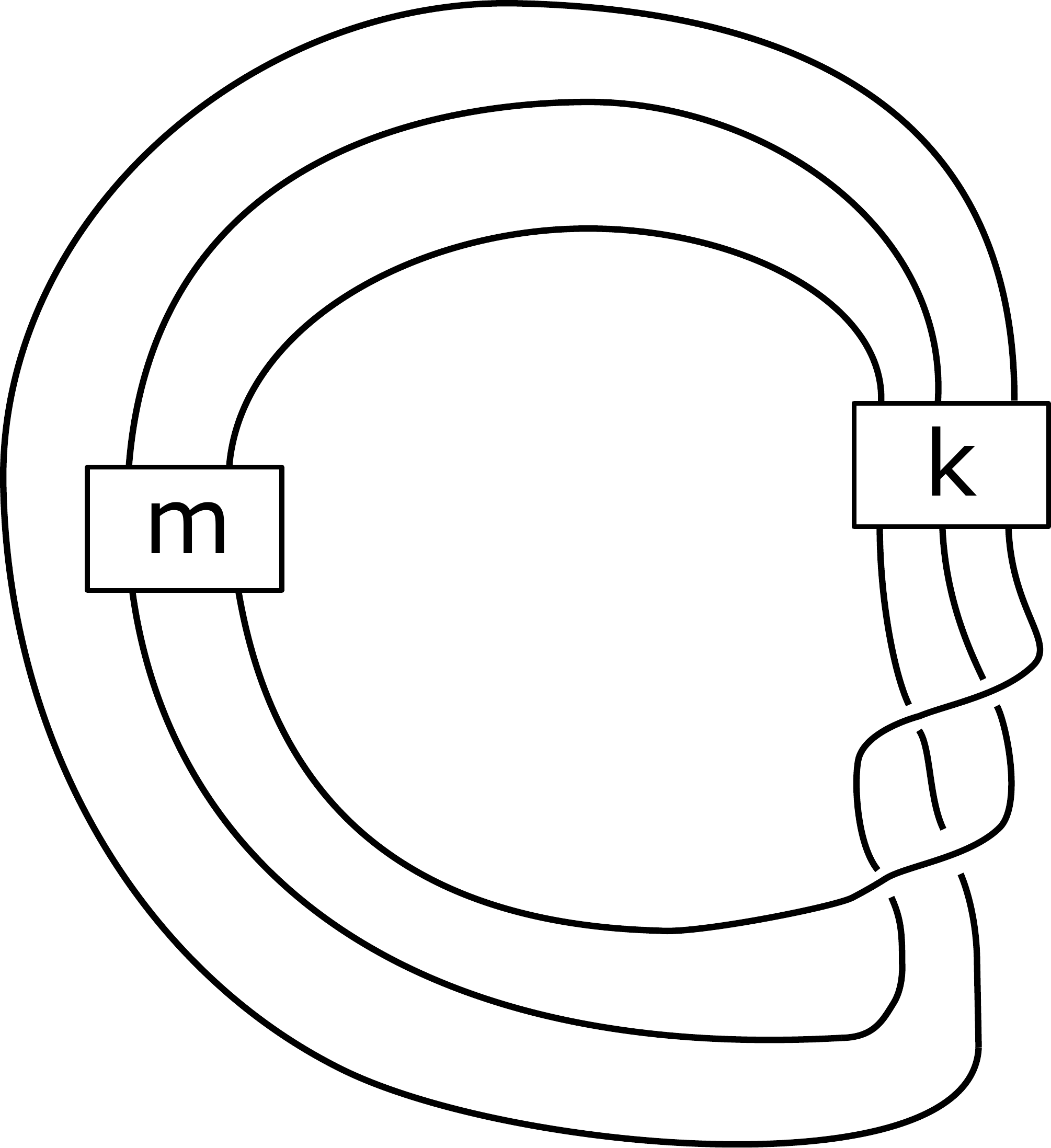}
\caption{The twisted torus knot $T^m_{3,3k+2}.$  Here the boxes represent $k$ and $m$ full twists of the strands passing through.}
\label{fig:knot}
\end{figure}

\subsection*{Acknowledgements}
The author would like to thank Professors Zolt\'{a}n Szab\'{o} and Peter Ozsv\'{a}th for suggesting and encouraging work on this problem.  This work was supported by the NSF RTG grant DMS-1502424.
\section{Background}
In what follows, we shall denote by $G$ the group $\PSL_2(\mathbf{R)}$ and by $\tilde{G}$ we shall denote its universal cover $\widetilde{\PSL_2(\mathbf{R})}$.

\subsection{Reducible Representations}\label{sec:red}

Let $K \subseteq S^3$ be a knot and denote by $X$ the knot exterior.  The abelianization map $\pi_1(X) \to H_1(X;\mathbf{Z})\cong \mathbf{Z}$ sends the meridian $\mu$ to a generator of the first homology.  For any $\zeta \in S^1 \subseteq \mathbf{C}$ (the complex unit circle), one can define a reducible representation $\rho_\zeta : \pi_1(X) \to \PSU(1,1) \cong G$ given by composing the abelianization map with:
\begin{align*}
H_1(X;\mathbf{Z}) &\to \PSU(1,1)\\
[\mu] &\mapsto \pm \left(\begin{matrix}
\zeta^{1/2} & 0\\
0 & \zeta^{-1/2}
\end{matrix}\right)
\end{align*}
where $\zeta^{1/2}$ is any square root of $\zeta$.

We are interested in finding paths of irreducible representations that are deformations of some such reducible representation.  The following theorem gives necessary and sufficient conditions for this to happen:

\begin{thm}[Lemma 4.8 and Proposition 10.3 of \cite{HP}]\label{thm:hp}
With $K$ and $\zeta$ as above, $\rho_\zeta$ may be deformed into a family of irreducible $\PSL_2(\mathbf{C})$ representations if and only if $\zeta$ is a root of the Alexander polynomial.  In that case, the irreducible curve of characters is contained in $\PSU(2)$ on one side and in $\PSU(1,1)\cong \PSL_2(\mathbf{R)}$ on the other side.
\end{thm}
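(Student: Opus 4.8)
\medskip
\noindent\emph{Sketch of a proof.}
The plan is to reduce the statement to a computation in twisted cohomology together with an integration argument, following the deformation-theoretic method of Ben Abdelghani and of Heusener--Porti(--Su\'{a}rez). Since the knot exterior $X$ is aspherical, we work with the cohomology of $\pi_1(X)$ with coefficients in $\mathfrak{g}=\mathfrak{sl}_2(\mathbf{C})$ under the action $\mathrm{Ad}\circ\rho_\zeta$ (the Lie algebra of $\PSL_2(\mathbf{C})$ is again $\mathfrak{g}$). Because $\rho_\zeta$ factors through $H_1(X;\mathbf{Z})\cong\mathbf{Z}$ and sends the meridian to a diagonal matrix with eigenvalue ratio $\zeta$, the module $\mathfrak{g}$ splits $\pi_1(X)$-equivariantly as $\mathfrak{g}\cong\mathbf{C}_0\oplus\mathbf{C}_\zeta\oplus\mathbf{C}_{\zeta^{-1}}$, where $\mathbf{C}_\lambda$ is the one-dimensional module on which a loop mapping to $n\in\mathbf{Z}$ acts by $\lambda^{n}$; here $\mathbf{C}_0$ is the Cartan direction and $\mathbf{C}_{\zeta^{\pm1}}$ the two root directions. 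Hence $H^{i}(X;\mathfrak{g})\cong H^{i}(X;\mathbf{C})\oplus H^{i}(X;\mathbf{C}_\zeta)\oplus H^{i}(X;\mathbf{C}_{\zeta^{-1}})$, and the Cartan summand of $H^1$ is always $\cong\mathbf{C}$, tangent to the curve $\zeta'\mapsto\rho_{\zeta'}$ of diagonal representations. The homological input is the classical identification, via the chain complex of the infinite cyclic cover, of $H^1(X;\mathbf{C}_\lambda)$ with a piece of the Alexander module: for $\lambda\neq1$ one has $H^1(X;\mathbf{C}_\lambda)\neq0$ iff $\Delta_K(\lambda)=0$, and it is one-dimensional when $\lambda$ is a simple root; also $H^2(X;\mathbf{C})=0$ (as $H_2(X)=0$), while $H^2(X;\mathbf{C}_\lambda)\cong H^1(X;\mathbf{C}_{\lambda^{-1}})$ by twisted Poincar\'{e}--Lefschetz duality.

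For the ``only if'' direction, suppose $\Delta_K(\zeta)\neq0$. Then $H^2(X;\mathfrak{g})=0$, so $\rho_\zeta$ is a smooth point of the representation scheme $R(\pi_1(X),\PSL_2(\mathbf{C}))$, of dimension $\dim Z^1(X;\mathfrak{g})=3$. But the representations conjugate to a diagonal one already fill out a $3$-dimensional subset near $\rho_\zeta$, so a neighbourhood of $\rho_\zeta$ in $R$ consists entirely of reducible representations; in particular none of the nearby representations is irreducible. (Equivalently, $H^1(X;\mathfrak{g})\cong\mathbf{C}$ is spanned by a Cartan-valued class; every cocycle is then cohomologous to one valued in the Borel subalgebra $\mathbf{C}_0\oplus\mathbf{C}_\zeta$, and such deformations integrate to conjugates of upper-triangular representations.)

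For the ``if'' direction, assume $\zeta$ is a simple root of $\Delta_K$ (the case relevant below), so $H^1(X;\mathbf{C}_\zeta)$ and $H^1(X;\mathbf{C}_{\zeta^{-1}})$ are each one-dimensional. Pick a cocycle $u\in Z^1(X;\mathfrak{g})$ with vanishing Cartan part and with both root-space parts $u_+,u_-$ nonzero; this is the candidate infinitesimal deformation into the irreducible locus. One integrates $u$ by building, order by order, a formal path $\rho_t=(\mathbf{1}+tu^{(1)}+t^2u^{(2)}+\cdots)\,\rho_\zeta$ with $u^{(1)}=u$, the obstruction at order $k$ being a cup-product class in $H^2(X;\mathfrak{g})$. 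The primary obstruction vanishes automatically: the root spaces are abelian, so $[\mathbf{C}_\zeta,\mathbf{C}_\zeta]=[\mathbf{C}_{\zeta^{-1}},\mathbf{C}_{\zeta^{-1}}]=0$, while $[\mathbf{C}_\zeta,\mathbf{C}_{\zeta^{-1}}]\subseteq\mathbf{C}_0$ and $H^2(X;\mathbf{C})=0$, whence $[u,u]$ is a coboundary. Controlling the higher obstructions is the technical heart of the argument: it uses that $\zeta$ is a \emph{simple} root, which makes the relevant cup-product pairings on $H^1$ nondegenerate and permits a consistent choice of the $u^{(k)}$ (equivalently, a transverse intersection in the character variety near $\chi_{\rho_\zeta}$ of the reducible curve with a curve whose generic character is irreducible); convergence of the formal path follows by Artin approximation, or from algebraicity of the representation variety. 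The resulting $\rho_t$ is irreducible for small $t\neq0$, since an invariant line would degenerate as $t\to0$ to an invariant line of $\rho_\zeta$ --- a coordinate axis --- which is ruled out by $u_+\neq0$ and $u_-\neq0$. I expect this integration step, i.e.\ killing all the higher obstructions, to be the main obstacle; it is precisely what the cited work of Heusener and Porti (and Su\'{a}rez, in the $\SL_2(\mathbf{C})$ setting) provides.

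It remains to locate the irreducible curve among the real forms. The construction gives a germ of a complex curve $C$ of irreducible characters through $\chi_{\rho_\zeta}$, essentially unique because the ``irreducible direction'' in $H^1$ is one-dimensional. Since $\zeta\in S^1$, the reducible $\rho_\zeta$ lies in $\PSU(1,1)$, and $\chi_{\rho_\zeta}$ is a real point of $C$ for the real structure on the character variety coming from complex conjugation on $\PSL_2(\mathbf{C})$. An irreducible representation sufficiently close to $\rho_\zeta$ preserves a nondegenerate Hermitian form (the form preserved by $\rho_\zeta$ becomes rigid upon passing to irreducibles), which is thus either definite --- giving a representation into $\PSU(2)$ --- or of signature $(1,1)$ --- giving one into $\PSU(1,1)\cong\PSL_2(\mathbf{R})$. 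Parametrizing the real arc of $C$ through $\chi_{\rho_\zeta}$ by a real coordinate $\tau$ vanishing on the reducibles, one checks that the signature of this form is constant on each of $\{\tau>0\}$ and $\{\tau<0\}$ and jumps as $\tau$ changes sign --- the crossing of the reducible locus being exactly where the form degenerates. Hence $C$ lies in $\PSU(2)$ on one side of the reducible locus and in $\PSL_2(\mathbf{R})$ on the other, as claimed.
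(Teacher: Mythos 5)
The paper does not actually prove this statement---it is imported verbatim from Heusener--Porti---and your sketch reconstructs essentially the argument of that cited source: the weight decomposition $\mathfrak{g}=\mathbf{C}_0\oplus\mathbf{C}_\zeta\oplus\mathbf{C}_{\zeta^{-1}}$ under $\mathrm{Ad}\circ\rho_\zeta$, the identification of the root-space cohomology $H^1(X;\mathbf{C}_{\zeta^{\pm1}})$ with the Alexander module, the dimension count showing a neighbourhood of $\rho_\zeta$ consists of reducibles when $\Delta_K(\zeta)\neq 0$, the obstruction-theoretic integration of a cocycle with both root components nonzero, and the signature jump of the invariant Hermitian form across the reducible locus. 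Two caveats are worth recording. First, your ``if'' direction (like the theorem of Heusener--Porti--Su\'arez it mirrors) requires $\zeta$ to be a \emph{simple} root, whereas the statement as quoted says only ``root''; this is harmless for the application at hand, since the Alexander polynomial of $P(-2,3,7)$ is Lehmer's polynomial and has only simple roots, but as a proof of the literal biconditional your argument is incomplete. Second, the killing of the higher obstructions---which you correctly identify as the technical heart---is asserted rather than carried out; that step is precisely the content of the cited result, so your sketch should be read as a correct reduction to, rather than a replacement of, the reference.
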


\subsection{The Translation Extension Locus}
Following \cite{CD}, we give a description of the translation extension locus and explain how it may be used to construct left-orderings.  Let $\rho:\pi_1(X) \to \tilde{G}$ be a representation of the fundamental group of the knot exterior.  As $G$ acts on the circle via orientation-preserving transformations, $\tilde{G}$ acts on $\mathbf{R}$ via orientation-preserving transformations as well and, in fact, can be viewed as a subgroup of $\mathrm{Homeo}^+(\mathbf{R})$.  More precisely, we are viewing $\mathbf{R}$ as the universal cover of $S^1$ via the covering map $x \mapsto e^{2\pi i x}$ so that, for instance, lifts of the identity matrix act on $\mathbf{R}$ by integral translations.  We are interested in when such representations factor through the fundamental group of the filled manifold $X(r)$, for then, by \cite{BRW}, it will follow that $X(r)$ will be left-orderable.

Let us define the translation map $\trans:\tilde{G} \to \mathbf{R}$ by
\[
\trans(g) \coloneqq \lim_{n \to \infty} \frac{g^n(x)-x}{n}
\]
for any $x \in \mathbf{R}$ (note this definition does not depend on $x$).  We call an element of $\tilde{G}$ elliptic, parabolic, or hyperbolic if its image under the natural projection $p:\tilde{G} \to G$ is elliptic, parabolic, or hyperbolic, respectively.  Note that for any elliptic $g \in \tilde{G}$, $\trans(g)$ is equal to $\frac{1}{2\pi}$ times the rotation angle corresponding to the action of $p(g)$, up to additon of an integer.

Now let us define the translation extension locus.  For each representation $\rho:\pi_1(X) \to \tilde{G},$ we may consider its restriction to the peripheral subgroup: $\rho|_{\pi_1(\partial X)}: \pi_1(\partial X) \to \tilde{G}$.  As $\partial X$ is a torus, $\pi_1(\partial X) \cong \mathbf{Z}^2$ is abelian, and hence all (nontrivial) elements in the image of $\rho|_{\pi_1(\partial X)}$ are of the same type (elliptic, parabolic, or hyperbolic).  So we shall call $\rho$ elliptic, parabolic, or hyperbolic if the image of the restriction to the peripheral subgroup consists of elements of the respective type.  Given $\rho$ as above, we may consider the composition $\trans \circ \rho|_{\pi_1(\partial X)}: \pi_1(\partial X) \to \mathbf{R}$.  Because the peripheral subgroup is abelian, this is actually a homomorphism, and hence may be considered as an element of $\mathrm{Hom}(\pi_1(\partial X),\mathbf{R}) \cong H^1(\partial X, \mathbf{R}) \cong \mathbf{R}^2$.  If $(\mu, \lambda)$ are the natural meridian-longitude basis for $H_1(\partial X, \mathbf{R}),$ then one may consider the dual basis $(\mu^*,\lambda^*)$ for $H^1(\partial X, \mathbf{R})$.  Using this basis, we may consider the subset $E$ of $\mathbf{R}^2$ corresponding to all elliptic and parabolic representations $\rho: \pi_1(X) \to \tilde{G}.$  The \textit{translation extension locus} is the closure of this set.  We denote it $EL_{\tilde{G}}(X) \coloneqq \bar{E} \subseteq \mathbf{R}^2$.

Let $r = \frac{p}{q} \in \mathbf{Q}$ with $p,q$ relatively prime.  Then $\gamma = p \mu + q \lambda$ is a primitive element of $H_1(\partial X, \mathbf{Z})$ and is represented by a simple closed curve in $\partial X$.  Let $L_r$ denote the set of elements of $H^1(\partial X, \mathbf{R})$ which vanish on $\gamma$.  In the meridian-longitude-dual basis, this corresponds to the $(a,b)\in \mathbf{R}^2$ such that $ap+bq=0$.  Graphically, this is the line through the origin in $\mathbf{R}^2$ with slope $-\frac{p}{q} = -r$.  Suppose an element in $EL_{\tilde{G}}(X) \cap L_r$ comes from an elliptic representation $\rho$.  Then, $\trans(\rho(\mu^p\lambda^q))=0$.  As remarked above, the translation of an elliptic element corresponds to the rotation angle about its fixed point, and so, $\rho(\mu^p\lambda^q) = 1\in \tilde{G}.$  It follows that $\rho$ factors through $\pi_1(X)/\langle\langle \mu^p\lambda^q \rangle\rangle \cong \pi_1(X(r)).$  It follows by \cite{BRW} that $X(r)$ is orderable if it is irreducible.  In fact, Culler and Dunfield proved the following slightly more general result, for $M$ any rational homology solid torus:

\begin{thm}[Lemma 4.4 of \cite{CD}]\label{thm:int}
Suppose $M$ is a compact orientable irreducible 3-manifold with $\partial M$ a torus, and assume the Dehn filling $M(r)$ is irreducible.  If $L_r$ meets $EL_{\tilde{G}}(M)$ at a nonzero point which is not parabolic or ideal, then $M(r)$ is orderable.
\end{thm}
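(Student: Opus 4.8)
The plan is to extract a genuine elliptic representation from the hypothesized point of $EL_{\tilde G}(M)$, show that it trivializes the filling slope, and then apply \cite{BRW}.

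First I would promote the point to a representation. By the construction of the translation extension locus recalled above (and its description in \cite{CD}), the set $E$ of points coming from honest elliptic or parabolic representations is dense in $EL_{\tilde G}(M) = \bar E$, and the points of $\bar E \setminus E$ are exactly the ideal points, i.e.\ limits arising from degenerations in the character variety. Hence a point of $EL_{\tilde G}(M)$ that is neither parabolic nor ideal already lies in $E$ and is realized by an elliptic representation $\rho \colon \pi_1(M) \to \tilde G$ whose peripheral translation class $\trans \circ \rho|_{\pi_1(\partial M)} \in H^1(\partial M;\mathbf R)$ is the given point. Since that point is nonzero, $\rho|_{\pi_1(\partial M)}$ is nontrivial, so in particular $\rho$ itself is nontrivial.

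Next I would show that $\rho$ descends to $\pi_1(M(r))$. Writing $r = p/q$ in lowest terms and $\gamma = \mu^p\lambda^q$, the assumption that the point lies on $L_r$ says precisely that $\trans(\rho(\gamma)) = 0$. Because $\pi_1(\partial M)$ is abelian and $\rho$ is elliptic, $\rho(\gamma)$ is either elliptic or trivial; but a nontrivial elliptic element of $\tilde G$ has translation number in $\mathbf R\setminus\mathbf Z$ — it is conjugate in $\mathrm{Homeo}^+(\mathbf R)$ to $x\mapsto x + \tfrac{\alpha}{2\pi} + n$, where $\alpha\in(0,2\pi)$ is the rotation angle of its image in $G$ and $n\in\mathbf Z$. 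So $\trans(\rho(\gamma)) = 0$ forces $\rho(\gamma) = 1$ in $\tilde G$, and therefore $\rho$ factors through $\pi_1(M)/\langle\langle\gamma\rangle\rangle \cong \pi_1(M(r))$, yielding $\bar\rho\colon\pi_1(M(r))\to\tilde G$ with the same (nontrivial) image as $\rho$.

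Finally I would invoke the orderability criterion. Viewing $\tilde G \subseteq \mathrm{Homeo}^+(\mathbf R)$, the homomorphism $\bar\rho$ exhibits a nontrivial action of $\pi_1(M(r))$ on $\mathbf R$ by orientation-preserving homeomorphisms; since $M(r)$ is irreducible, \cite{BRW} then gives that $\pi_1(M(r))$ is left-orderable, i.e.\ $M(r)$ is orderable. I expect the only real difficulty to be the first step: justifying that a point lying a priori only in the closure $\bar E$, but which is neither parabolic nor ideal, is genuinely attained by an elliptic representation with the stated peripheral translations. This rests on the analytic structure of the translation extension locus and the precise notion of an ideal point; granting that, the remaining steps are short.
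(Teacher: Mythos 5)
Your proposal is correct and follows essentially the same route the paper takes: it identifies a non-parabolic, non-ideal point of $L_r \cap EL_{\tilde{G}}(M)$ with a genuine elliptic representation (exactly the content of the paper's remark that such points are the elliptic points of $E$), uses $\trans(\rho(\mu^p\lambda^q))=0$ together with the fact that nontrivial elliptic lifts have non-integral translation number to conclude $\rho(\gamma)=1$ and hence that $\rho$ factors through $\pi_1(M(r))$, and then applies \cite{BRW}. This matches the argument sketched in the paragraph preceding the theorem, so nothing further is needed.
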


\begin{proof}[Remark.]\let\qed\relax Here an ideal point is an element of $\bar{E}\setminus E$ so that the hypotheses of the theorem may be equivalently expressed as $L_r$ intersecting $EL_{\tilde{G}}(M)$ at a nonzero elliptic point.
\end{proof}

Culler and Dunfield show that the translation extension locus has the following properties:

\begin{thm}[Theorem 4.3 of \cite{CD}]\label{thm:el}
The extension locus $EL_{\tilde{G}}(M)$ is a locally finite union of analytic arcs and isolated points. It is invariant under $D_{\infty}(M)$ with quotient homeomorphic to a finite graph. The quotient contains finitely many points which are ideal or parabolic in the sense defined above. The locus $EL_{\tilde{G}}(M)$ contains the horizontal axis $L_{\lambda}$, which comes from representations to $\tilde{G}$ with abelian image.
\end{thm}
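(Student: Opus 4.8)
The plan is to realize $EL_{\tilde G}(M)$ as the image of a semialgebraic slice of the $\PSL_2(\mathbf{R})$-character variety of $M$ under a real-analytic ``rotation number'' map, and then to read off the four assertions from a dimension count, the structure theory of analytic sets, and a symmetry-plus-finiteness analysis. For the algebraic model: since $M$ is a compact orientable irreducible $3$-manifold with torus boundary (and, here, a rational homology solid torus), $H^2(M;\mathbf{Z})=0$, so the Euler-class obstruction to lifting a representation $\bar\rho\colon\pi_1 M\to G$ to $\tilde\rho\colon\pi_1 M\to\tilde G$ vanishes, and the lifts of a given $\bar\rho$ form a torsor over $H^1(\pi_1 M;\mathbf{Z})$. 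One thus works with the real algebraic set $R=\mathrm{Hom}(\pi_1 M,G)$, which has finitely many components, decorated by the discrete choice of a lift. The condition that $\bar\rho$ be elliptic or parabolic on the peripheral subgroup is $|\tr\bar\rho(\gamma)|\le 2$ for peripheral $\gamma$, a closed semialgebraic conjugation-invariant condition carving out $R_{\mathrm{nh}}\subseteq R$; on the open locus where $\bar\rho(\mu)$ and $\bar\rho(\lambda)$ are elliptic, $\trans\tilde\rho(\mu)$ and $\trans\tilde\rho(\lambda)$ are real-analytic in $\bar\rho$ (essentially a normalized $\arccos$ of the relevant trace, plus the integer fixed by the lift), and they extend continuously over the parabolic stratum, where they are integers. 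Modulo conjugation, $E$ is precisely the image of $R_{\mathrm{nh}}$ under the resulting map to $H^1(\partial M;\mathbf{R})\cong\mathbf{R}^2$.

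Next I would prove the dimension bound that forces the arc-and-point structure. On a component of $R$ consisting of representations with abelian image, $\bar\rho$ factors through $H_1(M)$, hence so may the lift $\tilde\rho$; since the longitude is rationally null-homologous this gives $\trans\tilde\rho(\lambda)=0$ while $\trans\tilde\rho(\mu)$ is unconstrained, so such a component maps into the horizontal axis $L_\lambda$ --- and conversely these abelian representations sweep out all of $L_\lambda$, which is the last assertion. On a component through an irreducible $\rho$ with noncentral peripheral image, ``half lives, half dies'' (Poincar\'{e}--Lefschetz duality applied to $H^1(\partial M;\mathrm{Ad}\rho)$) shows that $H^1(M;\mathrm{Ad}\rho)\to H^1(\partial M;\mathrm{Ad}\rho)$ has image of dimension $\tfrac12\dim H^1(\partial M;\mathrm{Ad}\rho)=1$, so the restriction map of character varieties has at most one-dimensional image on that component; composing with the rotation-number chart keeps the contribution of that component in an analytic set of dimension $\le 1$. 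Locally such a set is a finite union of images of analytic arcs, hence a finite union of analytic arcs and isolated points, and globally this union is locally finite; since passing to the closure $\bar E$ only adds a set of no larger dimension, $EL_{\tilde G}(M)$ has the stated structure.

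Then I would treat the symmetry and finiteness. Let $D_\infty(M)$ be the group of affine transformations of $H^1(\partial M;\mathbf{R})$ generated by translations by the image of $H^1(M;\mathbf{Z})\to H^1(\partial M;\mathbf{R})$ (a rank-one lattice in our setting) together with $-\mathrm{id}$. Changing the lift by $\phi\in\mathrm{Hom}(\pi_1 M,\mathbf{Z})$ translates the pair of translation numbers by the restriction of $\phi$ to $\pi_1\partial M$; post-composing $\tilde\rho$ with the involution of $\tilde G$ that covers conjugation by an orientation-reversing isometry negates all translation numbers; hence $E$, and so $EL_{\tilde G}(M)$, is $D_\infty(M)$-invariant. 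Quotienting by the lattice of translations amounts, on the character side, to forgetting the choice of lift, so $EL_{\tilde G}(M)/D_\infty(M)$ maps finitely onto the non-hyperbolic part of the $\PSL_2(\mathbf{R})$-character variety of $M$, which is a semialgebraic set of dimension $\le 1$ and hence of finite topological type; this yields the ``finite graph'' statement. Its ideal points are the finitely many ideal points of the character curve in the sense of Culler--Shalen and its parabolic points lie among the finitely many points of that curve with $|\tr|=2$, so there are only finitely many of each.

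The genuine obstacle, I expect, is the finiteness half of the last two assertions: upgrading ``a set of dimension $\le 1$'' to ``its $D_\infty(M)$-quotient is honestly a finite graph carrying only finitely many ideal or parabolic points''. This requires understanding how $R_{\mathrm{nh}}$ sits inside the character variety, the properness of the rotation-number chart over a fundamental domain for the lattice action (so that its fibers are finite and its ends are controlled), and the identification of the points of $\bar E\setminus E$ with ideal points of the character curve. The dimension count is standard, but it must be carried out componentwise and reconciled with the possibly singular real-analytic structure near the parabolic stratum --- which is the other spot needing real care rather than bookkeeping.
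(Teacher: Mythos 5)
This statement is not proved in the paper at all: it is quoted verbatim as Theorem~4.3 of Culler--Dunfield \cite{CD} and used as a black box, so there is no internal proof to compare against; the relevant comparison is with the argument in \cite{CD} itself. Measured against that, your outline follows the same overall strategy as the source --- model $E$ as the image of the boundary-nonhyperbolic slice of the representation variety under the translation-number map, bound the image dimension by ``half lives, half dies'' on components containing irreducibles, identify the abelian components with $L_\lambda$, and derive the $D_\infty(M)$-symmetry from change of lift and from the orientation-reversing involution of $\tilde G$. Those ingredients are all correct and are the right ones.

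However, as a proof this is an outline with the load-bearing steps left open, and you say so yourself: the properness of the translation-number chart over a fundamental domain, the identification of $\bar E\setminus E$ with ideal points of the character variety, and the upgrade from ``dimension $\le 1$'' to ``the quotient is a finite graph with finitely many ideal and parabolic points'' are precisely where the real work in \cite{CD} lies (they build this on their Theorem~3.1/Lemma~4.2-type compactness statements for the augmented character variety), and none of that is supplied here. Two smaller points to fix: the claim $H^2(M;\mathbf{Z})=0$ holds for a knot exterior but not for a general rational homology solid torus, where $H^2(M;\mathbf{Z})\cong H_1(M,\partial M;\mathbf{Z})$ can have torsion; since $EL_{\tilde G}(M)$ is defined directly from $\tilde G$-representations you do not actually need every $G$-representation to lift, only to note that the liftable locus is a union of components (the Euler class being locally constant) with fibers torsors over $H^1(M;\mathbf{Z})$. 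Second, the dimension bound via Poincar\'e--Lefschetz duality is a tangent-space computation valid at smooth points of the representation variety; to conclude that the image of an entire component is at most one-dimensional you need the standard but not free argument handling singular points, which should be cited or carried out. So: right skeleton, consistent with the source, but not yet a proof.
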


Here $D_{\infty}(M)$ is a copy of the infinite dihedral group which acts on $\mathbf{R}^2$ via horizontal translations and rotations which depend on the homology of $M.$  In the case that $M$ is a knot exterior, $D_{\infty}(M)$ is generated by the unit horizontal translation 	$(x,y)\mapsto (x+1,y)$ and the rotation about the origin $(x,y)\mapsto (-x,-y)$.  In particular, the section of the translation extension locus with $0\leq x \leq \frac{1}{2}$ is a fundamental domain for the entire locus.

\subsection{The Knot Group}

As noted in the inroduction, the knot K can be interpreted as a twisted torus knot.  We shall use a presentation of the group $\pi_1(X)$ that comes from this interpretation, citing a result of Clay and Watson.

\begin{lem}[\cite{CW}]\label{lem:genpres}
If $X$ is the exterior of the twisted torus knot $T^m_{3,3k+2}$, then 
\[
\pi_1(X) = \left\langle a, b \left| a^2 \left(b^{-k}a\right)^m a = b^{2k+1} \left(b^{-k}a\right)^m b^{k+1} \right. \right\rangle
\]
Moreover, the peripheral subgroup is generated by the meridian $\mu$ and longitude $\sigma$ given in this presentation by:
\begin{align*}
\mu &= a^{-1} b^{k+1} \\
\sigma &= a \left(b^{-k}a\right)^m a \left(b^{-k}a\right)^m a
\end{align*}
The homological longitude $\lambda$ is then
\[
\lambda = \mu^{-3(3k+2)-4m} \sigma
\]
\end{lem}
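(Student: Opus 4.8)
\let\qed\relax
The plan is to exhibit the genus-two Heegaard picture underlying a twisted torus knot and read the presentation directly off of it. Recall that $T^m_{3,3k+2}$ is the closure of the $3$-braid $\beta = (\sigma_1\sigma_2)^{3k+2}\sigma_1^{2m}$: the block $(\sigma_1\sigma_2)^{3k}$ is the $k$ full twists on all three strands, the extra $(\sigma_1\sigma_2)^2$ makes the underlying torus knot $T_{3,3k+2}$, and $\sigma_1^{2m}$ is the $m$ full twists on two strands. Such a knot lies on a standardly embedded genus-two surface in $S^3$, and it is tunnel number one: after adding a single unknotting tunnel $\tau$ to $K = T^m_{3,3k+2}$, the manifold $S^3 \setminus N(K \cup \tau)$ is a genus-two handlebody $H$, so that the knot exterior $X$ is obtained from $H$ by attaching a single $2$-handle. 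Consequently $\pi_1(X) = \langle a, b \mid R\rangle$, where $a,b$ are free generators of $\pi_1(H)$ (a rank-two free group) adapted to the braid/torus-knot picture, and $R$ is the word spelled out by the attaching circle of the $2$-handle on $\partial H$.

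First I would pin down the tunnel $\tau$ and the free basis $a,b$ explicitly from the braid picture, and then compute $R$ by tracking the $2$-handle curve --- a meridian of $K$ dragged once around $K$ along the Heegaard surface, i.e.\ $K$ pushed off the surface into $H$ --- through the two handles of $H$. Each pass through the $2$-strand twisting region contributes a block $(b^{-k}a)^m$, while the torus-knot part contributes the flanking $a$'s together with $b^{2k+1}$ and $b^{k+1}$; collecting these yields the relator $a^2(b^{-k}a)^m a\,\bigl(b^{2k+1}(b^{-k}a)^m b^{k+1}\bigr)^{-1}$. A good consistency check here is the case $m=0$: the relator degenerates to $a^3 = b^{3k+2}$, the standard presentation of the torus-knot group $\pi_1(S^3 \setminus T_{3,3k+2})$, with $a^3 = b^{3k+2}$ the regular Seifert fibre.

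Next I would identify the peripheral curves on $\partial H$ and push them into $H$ to express them in terms of $a,b$. The meridian $\mu$ is a small loop linking $K$ once; in the chosen basis it reads $a^{-1}b^{k+1}$ (note $3(k+1) - (3k+2) = 1$, so this does map to a generator of $H_1$). For the longitude I would take $\sigma$ to be the curve on $\partial H$ running parallel to $K$ along the Heegaard surface (the ``surface framing''), which reads $a(b^{-k}a)^m a(b^{-k}a)^m a$; when $m=0$ this is $a^3$, the Seifert fibre, consistent with the torus-knot case where the fibre is the surface-framed longitude. To pass to the homological longitude one abelianizes: in $H_1(X) \cong \mathbf{Z}$ the relator forces $3[a] = (3k+2)[b]$, and since $\gcd(3,3k+2)=1$ and $[\mu]$ generates we get $[a] = 3k+2$, $[b] = 3$, hence $[\mu]=1$ and $[\sigma] = 3[a] + 2m([a]-k[b]) = 3(3k+2) + 4m$. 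Therefore $\lambda \coloneqq \mu^{-3(3k+2)-4m}\sigma$ is null-homologous in $X$, and one checks it still commutes with $\mu$ in $\pi_1(X)$, so that $(\mu,\lambda)$ is the standard meridian--longitude basis.

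The main obstacle is the bookkeeping in the middle step: fixing the unknotting tunnel and the free basis so that the $2$-handle curve and the two peripheral curves are traced through $H$ with the correct orientations and conventions, since an error in any one of them propagates. To control this I would lean on the two specializations as sanity checks --- $m=0$ recovering the torus-knot group together with its Seifert data, and $(k,m)=(1,1)$ recovering a known presentation of $P(-2,3,7)$ --- along with the homological consistency of $\lambda$ verified above. Alternatively, one can bypass the Heegaard picture: present $T^m_{3,3k+2}$ as $(-1/m)$-surgery on the unknot $C$ encircling two strands of $T_{3,3k+2}$, write down the two-component link group of $T_{3,3k+2} \cup C$, adjoin the surgery relation, and Tietze-reduce to two generators; this is more mechanical, but the peripheral curves emerge less transparently, so the surface picture is preferable for reading off $\mu$ and $\sigma$.
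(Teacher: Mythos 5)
The paper does not actually prove this lemma: it is quoted from Propositions 3.1 and 3.2 of \cite{CW} (with $\mu$ and $\sigma$ conjugated by $a$), and the only computation the paper supplies is the abelianization argument $[a]=[\mu]^{3k+2}$, $[b]=[\mu]^3$ used to correct the exponent in $\lambda=\mu^{-3(3k+2)-4m}\sigma$. The portion of your proposal that overlaps with what the paper verifies is correct and matches it exactly: $[\mu]=-(3k+2)+3(k+1)=1$ and $[\sigma]=3[a]+2m([a]-k[b])=3(3k+2)+4m$, so your $\lambda$ is indeed null-homologous. Your two specializations ($m=0$ giving $a^3=b^{3k+2}$ with $\sigma=a^3$ the fibre, and $(k,m)=(1,1)$ giving the $P(-2,3,7)$ presentation) are also genuine consistency checks.

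That said, as a proof of the lemma your proposal has a real gap: the entire content of the statement is the middle step you defer as ``bookkeeping,'' namely fixing the tunnel, the free basis $a,b$ of $\pi_1(H)$, and then actually tracing the attaching curve and the peripheral curves through $H$ to obtain the specific words $R$, $a^{-1}b^{k+1}$, and $a(b^{-k}a)^m a(b^{-k}a)^m a$. The surrounding framework (tunnel number one, $X = H\cup(\text{2-handle})$, hence a two-generator one-relator presentation) is standard and correct, but it yields only the \emph{shape} of the answer; none of the claimed words is derived. One detail to be careful about if you carry this out: the 2-handle is a neighborhood of the cocore disk of the tunnel $\tau$, so its attaching circle is a meridian of $\tau$ on $\partial H$, which is not literally ``a meridian of $K$ dragged once around $K$''; conflating these is exactly the kind of orientation/convention slip you rightly worry about. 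Since the paper itself leans entirely on \cite{CW} for this step, the honest comparison is that your sketch outlines how Clay--Watson's proof goes rather than replacing it, while independently (and correctly) reproducing the one computation the paper does perform --- the homological formula for $\lambda$, including the correction of the misprint in \cite{CW}.
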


\begin{proof}[Remark.]\let\qed\relax
This is essentially the content of Propositions 3.1 and 3.2 in \cite{CW} except that there, conjugates of $\mu$ and $\sigma$ by the generator $a$ are used instead.  Also note that there is a misprint in the paragraph following Proposition 3.2 in the formula for the homological longitude.  Indeed, by considering the abelianization, one finds that: $[a]^3=[b]^{3k+2}$ and moreover, $[a] = [\mu]^{3k+2}$ and $[b] = [\mu]^3$.
\end{proof}

As noted in the introduction, when $k=1$ we recover the pretzel knots $P(-2,5,2m+5)$.  So setting $m=1$ as well, we obtain:

\begin{cor}\label{cor:pres}
If $X$ is the exterior of the pretzel knot $P(-2,3,7)$, then 
\[
\pi_1(X) = \left\langle a, b \left| a^2 b^{-1} a^2 = b^2 a b^2 \right. \right\rangle
\]
Moreover, the peripheral subgroup is generated by the meridian $\mu$ and longitude $\sigma$ given in this presentation by:
\begin{align*}
\mu &= a^{-1} b^2 \\
\sigma &= a b^{-1} a^2 b^{-1} a^2
\end{align*}
The homological longitude $\lambda$ is then
\[
\lambda = \mu^{-19} \sigma
\]
\end{cor}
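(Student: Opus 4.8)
The plan is to obtain Corollary \ref{cor:pres} as a direct specialization of Lemma \ref{lem:genpres} to the parameter values $k=1$ and $m=1$, so the whole argument is a substitution followed by elementary simplification of words in the free group on $a,b$. First I would substitute into the defining relator $a^2(b^{-k}a)^m a = b^{2k+1}(b^{-k}a)^m b^{k+1}$: with $k=m=1$ the factor $(b^{-1}a)^1$ is just $b^{-1}a$, so the left side collapses to $a^2(b^{-1}a)a = a^2 b^{-1} a^2$ and the right side to $b^3(b^{-1}a)b^2 = b^2 a b^2$, yielding the presentation $\pi_1(X)=\langle a,b \mid a^2 b^{-1} a^2 = b^2 a b^2\rangle$.

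Next I would perform the same substitution on the peripheral elements. The meridian $\mu = a^{-1}b^{k+1}$ becomes $a^{-1}b^2$ immediately. For the longitude, $\sigma = a(b^{-k}a)^m a (b^{-k}a)^m a$ becomes $a(b^{-1}a)a(b^{-1}a)a$; collapsing the three pairs of adjacent $a$'s gives $\sigma = ab^{-1}a^2 b^{-1}a^2$. Finally the exponent appearing in the homological longitude is $-3(3k+2)-4m = -15-4 = -19$, so $\lambda = \mu^{-19}\sigma$, exactly as stated.

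There is no real obstacle here: the corollary is a transcription of Lemma \ref{lem:genpres}, and the only thing worth double-checking is that the exponent $-19$ is the correct one, that is, that this $\lambda$ is null-homologous — which also serves as a sanity check on the corrected formula flagged in the remark following Lemma \ref{lem:genpres}. Abelianizing the relator gives $[a]^3=[b]^5$, and combining this with $[\mu]=[a]^{-1}[b]^2$ and $H_1(X;\mathbf{Z})\cong\mathbf{Z}$ one solves uniquely for $[a]=[\mu]^5$ and $[b]=[\mu]^3$. Then reading off exponent sums in $\sigma = ab^{-1}a^2b^{-1}a^2$ gives $[\sigma]=[a]^5[b]^{-2}=[\mu]^{25-6}=[\mu]^{19}$, so indeed $[\lambda]=[\mu]^{-19}[\mu]^{19}=0$, confirming the stated formula.
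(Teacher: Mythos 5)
Your proposal is correct and matches the paper's own treatment: the corollary is obtained there simply by setting $k=m=1$ in Lemma \ref{lem:genpres}, exactly as you do, and your word simplifications and the exponent $-3(3k+2)-4m=-19$ all check out. Your added homological sanity check (verifying $[\sigma]=[\mu]^{19}$ via $[a]=[\mu]^5$, $[b]=[\mu]^3$) is a worthwhile confirmation of the corrected longitude formula flagged in the remark, but it does not change the approach.
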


\section{Computing Representations}\label{sec:reps}

In our computation of representations, we follow the methods of \cite{Chen}, who computed the $\SL_2(\mathbf{C})$ character variety for even 3-stranded pretzel knots.  For our purposes, it is more convenient to apply those methods to this particular presentation rather than using his result directly.  We state here the relevant facts:

\begin{lem}\label{lem:chen}
If $X,Y \in \SL_2\mathbf{C}$ then 
\begin{enumerate}[label=(\alph*)]
\item $XYX = \tr(XY) X - Y^{-1}$;
\item $\tr(X^{-1}) = \tr(X)$;
\item $\tr(XY) = \tr(YX)$;
\item $\tr(X^k) = \omega_k(\tr(X))X-\omega_{k-1}(\tr(X))I$;
\item $X$ and $Y$ have no common eigenvector if and only if $I,X,Y,XY$ are linearly independent. \label{linind}
\end{enumerate}
\end{lem}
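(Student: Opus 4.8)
The plan is to derive all five identities from the Cayley--Hamilton theorem, which for $X \in \SL_2(\mathbf{C})$ reads $X^2 - \tr(X)\,X + I = 0$, equivalently $X + X^{-1} = \tr(X)\,I$. Everything except part \ref{linind} is a short manipulation of this relation. For (b), take the trace of $X + X^{-1} = \tr(X)\,I$ to get $\tr(X) + \tr(X^{-1}) = 2\tr(X)$. Part (c) is the elementary identity $\tr(AB) = \tr(BA)$ valid for all square matrices, seen by writing each side as $\sum_{i,j} A_{ij}B_{ji}$. For (a), apply Cayley--Hamilton to $XY \in \SL_2(\mathbf{C})$: from $XY + (XY)^{-1} = \tr(XY)\,I$, i.e. $XY + Y^{-1}X^{-1} = \tr(XY)\,I$, multiply on the right by $X$ and use $Y^{-1}X^{-1}X = Y^{-1}$ to obtain $XYX = \tr(XY)\,X - Y^{-1}$.

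For (d) (where the left-hand side should of course read $X^k$, since the right-hand side is a matrix), I would induct on $k$. Set $\omega_0 = 0$, $\omega_1 = 1$, and $\omega_{k+1}(t) = t\,\omega_k(t) - \omega_{k-1}(t)$ — the Chebyshev polynomials of the second kind, $\omega_k(2\cos\theta) = \sin(k\theta)/\sin\theta$. The claimed formula holds for $k = 0, 1$ by inspection, and multiplying $X^k = \omega_k(\tr X)\,X - \omega_{k-1}(\tr X)\,I$ on the left by $X$ and substituting $X^2 = \tr(X)\,X - I$ reproduces exactly the recursion defining $\omega_{k+1}$.

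The one part requiring an idea beyond Cayley--Hamilton is \ref{linind}, so that is where the real work is. One direction is easy: if $X$ and $Y$ share an eigenvector, conjugate so that it is $e_1$; then $X$ and $Y$, hence also $I$ and $XY$, are simultaneously upper triangular, and the space of upper-triangular $2\times 2$ matrices is only $3$-dimensional, so $I, X, Y, XY$ are linearly dependent. For the converse I would first record the polarized Cayley--Hamilton identity
\[
XY + YX = \tr(X)\,Y + \tr(Y)\,X - \big(\tr(X)\tr(Y) - \tr(XY)\big) I,
\]
obtained by applying $A^2 - \tr(A)\,A + \det(A)\,I = 0$ to $A = X+Y$ and subtracting the corresponding relations for $X$ and for $Y$ (using $\det A = \tfrac12(\tr(A)^2 - \tr(A^2))$). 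Together with Cayley--Hamilton for $X$ and $Y$ individually, this shows that the unital subalgebra of $M_2(\mathbf{C})$ generated by $X$ and $Y$ is already spanned by $I, X, Y, XY$. Hence if those four matrices are linearly dependent, that subalgebra has dimension at most $3$, so it is a proper subalgebra of $M_2(\mathbf{C})$; by Burnside's theorem on irreducible matrix algebras it is therefore reducible, i.e. it preserves a line in $\mathbf{C}^2$, and that line is a common eigenvector of $X$ and $Y$.

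The main obstacle is thus isolating the right structural statement for \ref{linind}: the clean route is to observe that $\{I, X, Y, XY\}$ is not an arbitrary quadruple but a spanning set for the algebra generated by $X$ and $Y$, after which Burnside's theorem closes the argument immediately. If one prefers to avoid invoking Burnside, the alternative is a direct computation: assuming $I, X, Y$ independent and writing $XY = \alpha I + \beta X + \gamma Y$, one solves explicitly for a common eigenvector; this is routine but more calculational, and I would favor the algebra-theoretic argument.
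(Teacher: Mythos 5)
Your proposal is correct in all five parts. Note that the paper itself offers no proof of this lemma: it is quoted as a list of standard facts from the reference of Chen, so there is no argument in the paper to compare against. Your derivations of (a)--(d) from the Cayley--Hamilton relation $X^2 = \tr(X)X - I$ (equivalently $X + X^{-1} = \tr(X)I$) are the standard ones and check out, and you are right that (d) contains a typo in the statement --- the left-hand side should be $X^k$, not $\tr(X^k)$. For part (e), the easy direction (common eigenvector implies simultaneous triangularizability implies dependence) is fine, and for the converse your observation that the polarized Cayley--Hamilton identity forces $\mathrm{span}\{I,X,Y,XY\}$ to be the full unital algebra generated by $X$ and $Y$, followed by Burnside's theorem, is valid; Burnside is a slightly heavy tool for $2\times 2$ matrices (one can instead argue directly that if $XY = \alpha I + \beta X + \gamma Y$ with $I,X,Y$ independent, then a common eigenvector can be exhibited explicitly, or note that a $\le 3$-dimensional unital subalgebra of $M_2(\mathbf{C})$ is either commutative or conjugate into the upper-triangular matrices), but the argument as given is complete and correct.
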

Here $I$ is the $2\times2$ identity matrix, and the $\omega_k$ are polynomials characterized by:
\begin{align*}
\omega_0(t) &\equiv 0 \\
\omega_1(t) &\equiv 1 \\
\omega_{k+1}(t) &= t \omega_k(t) - \omega_{k-1}(t)\\
\omega_{-k}(t) &= -\omega_k(t)
\end{align*}
In particular, we have:
\begin{align*}
X^2 &= \tr(X)X-I\\
X^{-1} &= \tr(X)I-X
\end{align*}

Let $\rho: \pi_1(X) \rightarrow \SU(1,1)$ be a representation for the knot group.  Put $A = \rho(a)$ and $B = \rho(b)$, where $a$ and $b$ are the generators of the group as in the presentation in Lemma \ref{cor:pres}.  Let us also put $t = \tr(A)$, $s = \tr(B)$, and $r = \tr(AB)$.  Using the lemma, we may compute:

\begin{align*}
A^2B^{-1}A^2 &= \tr(A^2B^{-1}) A^2 - (B^{-1})^{-1} \\
&= \tr((tA-I)(sI-B))(tA-I) - B \\
&= (ts\cdot \tr(A) -s\cdot \tr(I) - t\cdot \tr(AB) + \tr(B))(tA-I)-B\\
&=(t^2s -tr -s)(tA-I)-B \\
&=(t^2s -tr -s)tA -(t^2s -tr -s)I-B \numberthis \label{lhs}
\end{align*}

Similarly,

\begin{align*}
B^2AB^2 &= \tr(B^2A) B^2 - A^{-1} \\
&= \tr((sB-I)A)(sB-I) - (tI-A) \\
&= (s\cdot \tr(BA) - \tr(A))(sB-I)-tI+A\\
&=(rs-t)(sB-I)-tI+A \\
&= (rs-t)sB - rsI+A \numberthis \label{rhs}
\end{align*}

\subsection{Irreducible Representations}

According to the group presentation in Corollary \ref{cor:pres}, we must have that $A^2B^{-1}A^2 = B^2AB^2$, and hence, by Lemma \ref{lem:chen} \ref{linind}, if $\rho$ is an irreducible representation then the equality of (\ref{lhs}) and (\ref{rhs}) implies that:

\begin{equation}\label{icond1}
\begin{cases}
1 = (t^2s -tr -s)t\\
-1 = (rs-t)s \\
rs =(t^2s -tr -s)
\end{cases}
\end{equation}
This can be solved in terms of $t$, and hence we have the following:

\begin{lem}
If $\rho: \pi_1(X) \rightarrow \SU(1,1)$ is an irreducible representation with $\tr(\rho(a))=t$, $\tr(\rho(b)) = s$, and $\tr(\rho(ab))=r$ then
\begin{equation}\label{icond2}
\begin{cases}
s = \frac{t}{t^2-1}\\
r = 1-\frac{1}{t^2}
\end{cases}
\end{equation}
\end{lem}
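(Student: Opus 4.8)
The plan is to solve the system (\ref{icond1}) by elimination, using the abbreviation $P \coloneqq t^2 s - t r - s$ for the common quantity appearing in it. The first equation then reads $P t = 1$, so immediately $t \neq 0$ and $P = 1/t$; the third equation says precisely $rs = P$, hence also $rs = 1/t$. Substituting $rs = 1/t$ into the second equation gives
\[
-1 = (rs - t)\,s = \left(\tfrac{1}{t} - t\right)s = \frac{1-t^2}{t}\,s .
\]
Since the left-hand side is a nonzero constant, this forces $s \neq 0$ and $t^2 \neq 1$, and solving yields $s = \dfrac{t}{t^2 - 1}$. Finally, dividing the relation $rs = 1/t$ by $s$ gives $r = \dfrac{1}{ts} = \dfrac{t^2 - 1}{t^2} = 1 - \dfrac{1}{t^2}$, which is exactly (\ref{icond2}).

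The only point requiring care is that the divisions performed above (by $t$ and by $s$) are legitimate; but each is licensed by an equation of (\ref{icond1}) whose opposite side is $\pm 1$, so in fact $t \notin \{0,1,-1\}$ automatically, and no solutions are discarded. One can moreover verify that the resulting values satisfy all three equations of (\ref{icond1}): indeed $t^2 s - tr - s = s(t^2-1) - tr = t - \frac{t^2-1}{t} = \frac{1}{t}$, from which the first and third equations follow at once, while $(rs - t)s = \bigl(\tfrac{1}{t} - t\bigr)s = -1$; thus (\ref{icond1}) and (\ref{icond2}) are in fact equivalent.

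There is essentially no genuine obstacle here — the computation is a two-line elimination. The one observation deserving a remark is that an irreducible representation cannot have $\tr(\rho(a)) \in \{0,\pm 1\}$, which is precisely what turns the elimination into a valid deduction rather than a purely formal rearrangement; as indicated, this drops out of (\ref{icond1}) itself, so nothing further is needed.
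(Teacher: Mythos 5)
Your elimination is correct and is exactly what the paper intends: the paper simply asserts that the system (\ref{icond1}) ``can be solved in terms of $t$,'' and your computation carries out that solution, including the (easy but worth noting) checks that $t\neq 0$ and $t^2\neq 1$ are forced by the equations themselves. No issues.
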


Now let us examine the ``intersection" between reducible and irreducible representations; this will correspond to the points at which reducible representations may be deformed to irreducible ones.  In the notation of section \ref{sec:red}, the traces of reducible representations satisfy:
\begin{align*}
\tr(A) &= e^{5i\theta} + e^{-5i\theta}\\
\tr(B) &= e^{3i\theta} + e^{-3i\theta}\\
\tr(AB) &= e^{8i\theta} + e^{-8i\theta}
\end{align*}
Substituting $z=e^{i\theta}$ and $t,s,r$ for the traces gives:
\begin{align*}
t &= z^5+z^{-5}\\
s &= z^3+z^{-3}\\
r &= z^8+z^{-8}
\end{align*}
Hence, by (\ref{icond2}):
\[
\begin{cases}
z^3+z^{-3} = \frac{z^5+z^{-5}}{(z^5+z^{-5})^2-1}\\
z^8+z^{-8} = 1-\frac{1}{(z^5+z^{-5})^2}
\end{cases}
\]
which is equivalent to:
\[
\begin{cases}
z^{13}+z^{-7}+z^3 + z^7+z^{-13}+z^{-3}- z^5-z^{-5}=0\\
z^{18}+z^{-2}+2z^8+z^2+z^{-18}+2z^{-8} - z^{10}-z^{-10}-1=0
\end{cases}
\]
which one can factor as:
\[
\begin{cases}
(z^{-1}+z)(z^{-2}+z^2)(z^{-10}-z^{-8}+z^{-4}-z^{-2}+1-z^{2}+z^{4}-z^{8}+z^{10})=0\\
(z^{-8}+z^{-6}+z^{-4}-1+z^4+z^6+z^8)(z^{-10}-z^{-8}+z^{-4}-z^{-2}+1-z^{2}+z^{4}-z^{8}+z^{10})=0
\end{cases}
\]
One sees that $z^2$ must be a root of the polynomial $x^{-5}-x^{-4}+x^{-2}-x^{-1}+1-x+x^{2}-x^{4}+x^{5},$ which is the Alexander polynomial of the knot $P(-2,3,7).$  This is consistent with the general result of Theorem \ref{thm:hp}, which implies that reducible representations can be deformed into irreducible ones only if $z^2$ is a root of the Alexander polynomial.

Let us consider consider a deformation of the representation corresponding to the root $x=e^{i\theta_0}$ with $\theta_0 \approx 2.0453 \approx 0.3255(2\pi)$.  This corresponds to $t_0=2\cos(\frac{5\theta_0}{2})\approx 0.78$
\subsection{Realization of Representations}\label{sec:real}

Let us investigate a partial converse of the previous lemma.  In particular, given real numbers $t$, $s$ ,and $r$, when does there exist a representation $\rho: \pi_1(X) \rightarrow \SU(1,1)$ satisfying $\tr(\rho(a))=t$, $\tr(\rho(b)) = s$, and $\tr(\rho(ab))=r$?  Let us restrict ourselves to the case where $\rho(a)$ is elliptic.  Then, we may suppose, by conjugating appropriately if necessary, that:
\begin{align*}
\rho(a) &= \left(
\begin{matrix}
e^{i\alpha} & 0 \\
0 & e^{-i\alpha} 
\end{matrix}\right)\\
\rho(b) &= \left(
\begin{matrix}
\sqrt{1+R^2}e^{i\beta} & R \\
R & \sqrt{1+R^2}e^{-i\beta} 
\end{matrix}\right)\\
\rho(ab) &= \left(
\begin{matrix}
\sqrt{1+R^2}e^{i(\alpha+\beta)} & Re^{i\alpha} \\
Re^{-i\alpha} & \sqrt{1+R^2}e^{-i(\alpha+\beta)} 
\end{matrix}\right)
\end{align*}
for some $\alpha,\beta,R \in \mathbb{R}$.
Hence we are looking for real solutions of:
\begin{align*}
t = \tr(\rho(a)) &= 2\cos\alpha\\
s = \tr(\rho(b)) &= 2\sqrt{1+R^2}\cos\beta\\
r = \tr(\rho(ab)) &= 2\sqrt{1+R^2}\cos(\alpha+\beta)
\end{align*}
For $s\neq0$ we can divide the last two equations:
\begin{align*}
\frac{r}{s} &= \frac{\cos(\alpha+\beta)}{\cos\beta}\\
&= \cos\alpha - \sin\alpha \tan\beta
\end{align*}

As $|t|<2$, $\cos\alpha \neq 1$ so that $\sin\alpha \neq 0$.  Hence, under these conditions:
\begin{align*}
\cos\alpha &= \frac{t}{2}\\
\tan\beta &= \frac{1}{\sin\alpha}\left(\frac{t}{2}-\frac{r}{s}\right)
\end{align*}
which determine $\alpha$ and $\beta$ (up to sign or multiples of $\pi$).  Finally, we can see that such a representation can be constructed as long as there is an $R$ satisfying:
\begin{equation}\label{r_cond}
R^2 = \left(\frac{s}{2\cos\beta}\right)^2-1
\end{equation}
which happens exactly when
\begin{equation}\label{condSU}
\left|\frac{s}{2\cos\beta}\right| \geq 1.
\end{equation}

Notice that when the above inequality fails, one can find a purely imaginary $R$ that satisfies the condition (\ref{r_cond}).  In this case, the representation is actually in $\SU(2)$ instead.  This is consistent with Theorem \ref{thm:hp}, which implies that deformations of reducible representations will be in $\SU(1,1)$ on one ``side" and in $\SU(2)$ on the other. If we further suppose $t,s,r$ satisfy condition \ref{icond2}, then we see that $0<t<1$ implies $s \neq 0$ and that $s$ varies continuously.  Therefore, for a continuous path of $t$-values within this interval, one can produce a continuous path of representations into $\SU(2) \cup \SU(1,1).$

\section{Constructing a Path in the Translation Extension Locus}
\subsection{Trace Computations}
Here we use the calculations of the previous section to demonstrate the existence of a certain path in the translation extension locus.  Notice that, although the previous section considered representations into $\SU(1,1),$ that group is conjugate to $\SL_2(\mathbf{R}).$  Moreover, it suffices to produce a path of representations into $\SL_2(\mathbf{R})$ as such a path will lift to a path of representations into $\tilde{G},$ the universal cover.

By Corollary \ref{cor:pres}, $\rho(\mu) = A^{-1}B^2$.  Hence, we compute:

\begin{align*}
\tr(\rho(\mu)) &= \tr((tI-A)(sB-I))\\
&= ts\cdot \tr(B) - t\cdot \tr(I) - s\cdot \tr(AB) + \tr(A)\\
&= ts^2 - sr - t
\end{align*}

Similarly, we have that $\rho(\sigma) = AB^{-1}A^2B^{-1}A^2$, and hence, using (\ref{lhs}),
\begin{align*}
\tr(\rho(\sigma)) &= \tr((AB^{-1})((t^2s -tr -s)tA -(t^2s -tr -s)I-B))\\
&= (t^2s -tr -s)t\cdot \tr(AB^{-1}A) - (t^2s -tr -s)\tr(AB^{-1}) - \tr(A)\\
&= (t^2s -tr -s)t\cdot \tr(\tr(AB^{-1})A-(B^{-1})^{-1}) - (t^2s -tr -s)\tr(AB^{-1}) - \tr(A)\\
&= (t^2s -tr -s)t\cdot \tr(\tr(A(sI-B))A-B) - (t^2s -tr -s)\tr(A(sI-B)) - \tr(A)\\
&= (t^2s -tr -s)t\cdot \tr((st-r)A-B) - (t^2s -tr -s)(st-r) - t\\
&= (t^2s -tr -s)t((st-r)t-s)- (t^2s -tr -s)(st-r) - t\\
&= (t^2s -tr -s)^2 t- (t^2s -tr -s)(st-r) - t
\end{align*}

Using the conditions in (\ref{icond1}) and (\ref{icond2}), these may be expressed in terms of $t$:
\begin{align}
\tr(\rho(\mu)) &= \frac{t^3}{(t^2-1)^2}-\frac{1}{t}-t \eqqcolon m(t) \label{mu}\\
\tr(\rho(\sigma)) &= \frac{2}{t} - \frac{t}{t^2-1} -\frac{1}{t^3} -t \eqqcolon \ell(t) \label{lamb}
\end{align}

\begin{lem}\label{lem:mono}
For $t \in (0,1),$ $m(t)$ and $\ell(t)$ are strictly increasing as functions of $t.$
\end{lem}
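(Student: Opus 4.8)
The plan is to prove the statement by differentiation: I will show directly that $m'(t) > 0$ and $\ell'(t) > 0$ for all $t \in (0,1)$. Since both $m$ and $\ell$ are rational functions whose only poles are at $t = 0$ and $t = \pm 1$, they are smooth on the interval $(0,1)$, so establishing positivity of the derivative there immediately gives strict monotonicity.

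For $m$, differentiating $m(t) = t^3(t^2-1)^{-2} - t^{-1} - t$ yields
\[
m'(t) = \frac{-t^2(t^2+3)}{(t^2-1)^3} + \frac{1}{t^2} - 1 ,
\]
and on $(0,1)$ each summand is positive term by term: in the first, the numerator $-t^2(t^2+3)$ is negative while the denominator $(t^2-1)^3$ is also negative (since $0 < t^2 < 1$), so the quotient is positive; and $\frac{1}{t^2} - 1 > 0$ because $t < 1$. Hence $m'(t) > 0$. This case is essentially immediate.

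For $\ell$, differentiating $\ell(t) = 2t^{-1} - t(t^2-1)^{-1} - t^{-3} - t$ gives
\[
\ell'(t) = -\frac{2}{t^2} + \frac{t^2+1}{(t^2-1)^2} + \frac{3}{t^4} - 1 ,
\]
which now genuinely has terms of both signs, so positivity is not visible termwise. Here I would clear denominators: multiplying through by the positive quantity $t^4(t^2-1)^2$ and substituting $u = t^2 \in (0,1)$ reduces the claim $\ell'(t) > 0$ to the polynomial inequality $Q(u) := -u^4 + u^3 + 7u^2 - 8u + 3 > 0$ on $(0,1)$. The crux is the regrouping $Q(u) = u^3(1-u) + (7u^2 - 8u + 3)$: the first summand is positive for $u \in (0,1)$, and the quadratic $7u^2 - 8u + 3$ has discriminant $64 - 84 = -20 < 0$ with positive leading coefficient, hence is positive for every real $u$. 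Therefore $Q(u) > 0$ on $(0,1)$, so $\ell'(t) > 0$ there. The only mildly fiddly point in the whole argument is the bookkeeping that simplifies $\ell'(t)$ to $Q(u)/\big(t^4(t^2-1)^2\big)$ after the substitution, but this is a routine polynomial expansion; once $Q$ is in hand the decomposition into a positive term plus a positive-definite quadratic makes the conclusion transparent, and I anticipate no conceptual obstacle.
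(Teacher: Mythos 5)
Your proof is correct and follows essentially the same route as the paper: differentiate both functions and verify positivity of the derivatives on $(0,1)$ by elementary algebra, with the $m'$ case handled identically (termwise positivity). The only difference is cosmetic: for $\ell'$ the paper regroups the terms as $\frac{1+t^2}{(t^2-1)^2}+\frac{(t^2+3)(1-t^2)}{t^4}$, each summand manifestly positive on $(0,1)$, whereas you clear denominators and check the quartic $Q(u)=-u^4+u^3+7u^2-8u+3$ via the decomposition $u^3(1-u)+(7u^2-8u+3)$; both verifications are sound.
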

\begin{proof}
One computes, by (\ref{mu}):
\[
m'(t) = \frac{3t^2(t^2-1)-4t^4}{(t^2-1)^3} +\frac{1}{t^2}-1
\]
The first term on the right-hand-side is seen to be positive for positive $t<1,$ and $\frac{1}{t^2}>1$ for such values of $t$ as well.

Similarly, by (\ref{lamb}),
\begin{align*}
\ell'(t) &= -\frac{2}{t^2} - \frac{t^2-1-2t^2}{(t^2-1)^2} +\frac{3}{t^4} - 1 \\
&= \frac{1+t^2}{(t^2-1)^2} + \frac{(t^2+3)(1-t^2)}{t^4}
\end{align*}
which is seen to be positive for $0<t<1.$
\end{proof}

\begin{lem}\label{lem:tan}
If $s$ and $r$ satisfy condition (\ref{icond2}), then on the interval $t\in \left(\sqrt{2-\sqrt{2}},\sqrt{\sqrt{10}-2}\right)$ the quantity $\frac{t}{2}-\frac{r}{s}$ is positive and increasing as a function of $t$.
\end{lem}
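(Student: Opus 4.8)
The plan is to reduce the statement to two elementary calculus facts about the explicit rational function $f(t) := \frac{t}{2} - \frac{r}{s}$, where by condition \eqref{icond2} we have $s = \frac{t}{t^2-1}$ and $r = 1 - \frac{1}{t^2}$. First I would substitute these expressions to get $\frac{r}{s} = \frac{(1 - 1/t^2)(t^2-1)}{t} = \frac{(t^2-1)^2}{t^3}$, so that
\[
f(t) = \frac{t}{2} - \frac{(t^2-1)^2}{t^3} = \frac{t}{2} - \frac{t^4 - 2t^2 + 1}{t^3} = \frac{t}{2} - t + \frac{2}{t} - \frac{1}{t^3} = -\frac{t}{2} + \frac{2}{t} - \frac{1}{t^3}.
\]
Now everything is a single-variable problem on the interval $I := \left(\sqrt{2 - \sqrt 2},\, \sqrt{\sqrt{10} - 2}\right)$; note the endpoints satisfy $t^2 = 2 - \sqrt 2 \approx 0.586$ and $t^2 = \sqrt{10} - 2 \approx 1.162$, so $I$ straddles $t = 1$ and the function $f$ is smooth on $I$ (the apparent pole at $t=1$ is not in the picture for positivity, but note the original $s, r$ individually blow up there — however $r/s$ does not, so $f$ extends analytically across $t=1$, which is exactly why the simplified form above is the right thing to work with).

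For positivity, I would show $f > 0$ on $I$ by checking it at the relevant place and using monotonicity. Clearing denominators, $f(t) > 0 \iff -\frac{t^4}{2} + 2t^2 - 1 > 0 \iff t^4 - 4t^2 + 2 < 0 \iff t^2 \in (2 - \sqrt 2,\, 2 + \sqrt 2)$. Since on $I$ we have $t^2 < \sqrt{10} - 2 \approx 1.162 < 2 + \sqrt 2$ and $t^2 > 2 - \sqrt 2$, both inequalities hold strictly, so $f > 0$ on all of $I$. (This also explains the choice of the left endpoint: $\sqrt{2 - \sqrt 2}$ is precisely the lower root of $t^4 - 4t^2 + 2$, i.e. the left edge of the positivity region.) For monotonicity, I would compute
\[
f'(t) = -\frac{1}{2} - \frac{2}{t^2} + \frac{3}{t^4} = \frac{-t^4/2 - 2t^2 + 3}{t^4} = \frac{-(t^4 + 4t^2 - 6)}{2t^4},
\]
so $f'(t) > 0 \iff t^4 + 4t^2 - 6 < 0 \iff t^2 < \sqrt{10} - 2$. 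This holds for all $t \in I$ by the definition of the right endpoint, so $f$ is strictly increasing there. (Again the right endpoint $\sqrt{\sqrt{10}-2}$ is chosen exactly as the place where $f'$ vanishes.)

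I do not expect a genuine obstacle here — the lemma is a bookkeeping statement whose two interval endpoints have been reverse-engineered from the two inequalities being proved, so the only care needed is (i) the algebraic simplification showing $r/s$ is a polynomial-over-monomial with no pole issue on $I$, and (ii) correctly identifying $2 - \sqrt 2$ as a root of $t^4 - 4t^2 + 2$ and $\sqrt{10} - 2$ as a root of $t^4 + 4t^2 - 6$. The mildly delicate point worth a sentence in the writeup is that $f$, unlike $s$ and $r$ separately, is analytic across $t = 1$, so "increasing on $I$" makes sense despite $I \ni 1$; this is immediate from the simplified formula $f(t) = -\frac{t}{2} + \frac{2}{t} - \frac{1}{t^3}$.
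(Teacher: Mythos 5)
Your proposal is correct and follows essentially the same route as the paper: both simplify $\frac{t}{2}-\frac{r}{s}$ to $-\frac{t}{2}+\frac{2}{t}-\frac{1}{t^3}$ and then read off positivity from the quadratic $u^2-4u+2$ in $u=t^2$ and monotonicity from $u^2+4u-6$, identifying the interval endpoints as the relevant roots (the paper just packages these as $(t^2-2)^2-2$ and $(t^2+2)^2-10$). Your added remark that the expression is analytic across $t=1$ even though $s$ has a pole there is a harmless, slightly finer observation than the paper makes.
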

\begin{proof}
It is straightforward to compute that 
\begin{align*}
\frac{t}{2}-\frac{r}{s} &= -\left(\frac{t}{2}-\frac{2}{t}+\frac{1}{t^3}\right) \\
&= -\frac{\left(t^2-2\right)^2-2}{2t^3}
\end{align*}
which is easily seen to be positive whenever $-\sqrt{2}<t^2-2 < \sqrt{2},$ and so in particular, it is positive when $t\in \left(\sqrt{2-\sqrt{2}},\sqrt{2+\sqrt{2}}\right).$  Now, differentiating with respect to $t$ gives
\begin{align*}
\frac{\mathrm{d}}{\mathrm{d}t}\left(\frac{t}{2}-\frac{r}{s}\right) &=  -\left(\frac{1}{2}+\frac{2}{t^2}-\frac{3}{t^4}\right) \\
&= -\frac{\left(t^2+2\right)^2-10}{2t^4}
\end{align*}
which is positive whenever  $-\sqrt{10}<t^2+2 < \sqrt{10},$ and this is satisfied when $t\in \left(0,\sqrt{\sqrt{10}-2}\right).$  The interval in the statement of the lemma is the intersection of these two intervals.
\end{proof}

\subsection{A Path of Irreducible Representations}

Now observe that, on the interval $t\in(t_0,t_1),$ $\alpha$ is increasing as a function of $t$ since $\alpha$ remains between $\frac{3\pi}{2}$ and $2\pi.$  This implies that $\sin\alpha$ is negative and increasing on this interval.  Hence, by lemma \ref{lem:tan}, $\tan\beta$ is negative and decreasing on this interval.  Because $\beta$ is in the second quadrant, it follows that $\cos\beta$ is negative and increasing.  Notice that $s = \frac{t}{t^2-1}$ is negative and decreasing on this interval (since $t_1<1$), and so the quantity $\frac{s}{2\cos\beta}$ is positive and increasing on the interval.  Therefore, condition (\ref{condSU}) is satisfied on the interval, and we have shown:

\begin{thm}\label{thm:rep}
There exists a continuous path of elliptic representations $\rho(t): \pi_1(X) \rightarrow \SU(1,1)$ for $t\in [t_0,t_1)$ which is irreducible except at $t_0$ and is such that $\rho(t_0)=\rho_0$ and $\tr(\rho(t)(a))=t.$
\end{thm}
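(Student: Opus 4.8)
The plan is to construct the path directly from the parametrization in Section~\ref{sec:real}, feeding in the trace functions forced by irreducibility. For each $t$ set $s=s(t)=\frac{t}{t^2-1}$ and $r=r(t)=1-\frac{1}{t^2}$ as in~(\ref{icond2}), and attempt to realize a representation with $\rho(a)$ elliptic and diagonal and with $\tr(\rho(a))=t$, $\tr(\rho(b))=s$, $\tr(\rho(ab))=r$, using the explicit matrices of Section~\ref{sec:real}. First one checks that this assignment genuinely produces a homomorphism: a direct substitution shows that~(\ref{icond2}) implies all three equations of~(\ref{icond1}), and subtracting~(\ref{rhs}) from~(\ref{lhs}) exhibits $A^2B^{-1}A^2-B^2AB^2$ as a linear combination of $A$, $I$, $B$ whose three coefficients vanish exactly when the three equations of~(\ref{icond1}) hold. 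Hence, whenever the matrices exist, they satisfy the defining relation of Corollary~\ref{cor:pres}, and therefore define a representation $\rho(t)\colon\pi_1(X)\to\SU(1,1)$.

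The heart of the matter is to show that these matrices exist for every $t\in[t_0,t_1)$, i.e.\ that condition~(\ref{condSU}), $\bigl|\tfrac{s}{2\cos\beta}\bigr|\ge 1$, holds throughout, so that a real $R$ exists and we remain on the $\SU(1,1)$ side rather than crossing into $\SU(2)$. Here I would assemble the sign and monotonicity data already prepared: taking the branch $\alpha(t)\in\bigl(\tfrac{3\pi}{2},2\pi\bigr)$ with $\cos\alpha=\tfrac{t}{2}$, the angle $\alpha$ is increasing on $(t_0,t_1)$, so $\sin\alpha$ is negative and increasing; by Lemma~\ref{lem:tan}, $\tfrac{t}{2}-\tfrac{r}{s}$ is positive and increasing on this range, whence $\tan\beta=\tfrac{1}{\sin\alpha}\bigl(\tfrac{t}{2}-\tfrac{r}{s}\bigr)$ is negative and decreasing, which --- together with continuity from the value at $t_0$ --- forces $\beta(t)\in\bigl(\tfrac{\pi}{2},\pi\bigr)$, so $\cos\beta$ is negative and increasing; and $s=\tfrac{t}{t^2-1}$ is negative and decreasing on $(0,1)$. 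Combining these facts, $\tfrac{s}{2\cos\beta}$ is positive and increasing on $(t_0,t_1)$; it equals $1$ at $t_0$, hence exceeds $1$ on $(t_0,t_1)$. Thus~(\ref{condSU}) holds, and $R(t)^2=\bigl(\tfrac{s}{2\cos\beta}\bigr)^2-1$ equals $0$ at $t_0$ and is positive thereafter.

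It remains to read off the stated qualitative properties. Since $|t|<2$, the matrix $\rho(t)(a)$ is elliptic with two distinct eigenvalues, so $\rho(t)(a)$ and $\rho(t)(b)$ share an eigenvector precisely when $\rho(t)(b)$ is diagonal, i.e.\ when $R=0$; by the previous step this occurs only at $t_0$, so for $t\in(t_0,t_1)$ the matrices $I,A,B,AB$ are linearly independent and $\rho(t)$ is irreducible by Lemma~\ref{lem:chen}~\ref{linind}, whereas at $t_0$ we have $R=0$, and there the relevant traces agree with those of the reducible representation at the Alexander root (the computation of Section~\ref{sec:reps}), so the construction returns $\rho(t_0)=\rho_0$. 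Finally, choosing the branches $\alpha(t)\in\bigl(\tfrac{3\pi}{2},2\pi\bigr)$, $\beta(t)\in\bigl(\tfrac{\pi}{2},\pi\bigr)$, $R(t)\ge 0$ makes $\alpha$, $\beta$, $R$ depend real-analytically on $t$, so $t\mapsto\bigl(\rho(t)(a),\rho(t)(b)\bigr)$, and hence $\rho(t)$, is continuous; and $\tr(\rho(t)(a))=2\cos\alpha(t)=t$ by construction. All of the required assertions then follow.

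The step I expect to be the real obstacle is the one in the second paragraph: certifying that~(\ref{condSU}) never fails on the whole interval. This amounts to tracking, simultaneously and with the correct branches, the signs and monotonicity of $\alpha$, $\beta$, and $s$, so as to remain on the $\SU(1,1)$ side with the crossover to reducibility landing exactly at the Alexander root $t_0$; everything else is routine bookkeeping, and Lemmas~\ref{lem:mono} and~\ref{lem:tan} are precisely the inputs that make that bookkeeping go through.
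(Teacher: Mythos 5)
Your proposal is correct and follows essentially the same route as the paper: realize the representations via the explicit $\SU(1,1)$ matrices of Section~\ref{sec:real} with $s,r$ given by~(\ref{icond2}), and use the sign/monotonicity analysis of $\alpha$, $\beta$, and $s$ (via Lemma~\ref{lem:tan}) to show $\tfrac{s}{2\cos\beta}$ is positive and increasing, hence that condition~(\ref{condSU}) holds on all of $[t_0,t_1)$ with equality (i.e.\ $R=0$, reducibility) exactly at $t_0$. Your added checks --- that~(\ref{icond2}) implies all three equations of~(\ref{icond1}) so the group relation is genuinely satisfied, and that $\tfrac{s}{2\cos\beta}=1$ at $t_0$ so monotonicity yields the inequality --- are details the paper leaves implicit, not a different argument.
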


Now observe that, since $\mu$ and $\sigma$ commute, if $\rho$ is elliptic then $\rho(\mu)$ and $\rho(\sigma)$ have the same fixed point.  Hence, up to conjugation, they have the form:
\begin{align*}
\rho(\mu) &= \left(\begin{matrix}
e^{i\phi} & 0 \\
0 & e^{-i\phi}
\end{matrix}\right) \\
\rho(\sigma) &= \left(\begin{matrix}
e^{i\psi} & 0 \\
0 & e^{-i\psi}
\end{matrix}\right)
\end{align*}
So  that $\tr(\rho(\mu)) = 2\cos\phi$ and $\tr(\rho(\sigma)) = 2\cos\psi$ , and also $\trans(\rho(\mu)) = \frac{\phi}{\pi}$.

Moreover,
\begin{align*}
\rho(\lambda) &= \rho(\mu^{-19}\sigma)\\
&= \left(\begin{matrix}
e^{i(-19  \phi + \psi)} & 0 \\
0 & e^{i(19\phi - \psi)}
\end{matrix}\right)
\end{align*}
Hence $\trans(\rho(\lambda)) = \frac{-19  \phi + \psi}{\pi}$.  So the point on the translation extension locus is: $(\frac{\phi}{\pi},\frac{-19  \phi + \psi}{\pi})$.  Note that, when considering the above matrices in $\SU(1,1)\cong \SL_2(\mathbf{R}),$ the values of $\phi$ and $\psi$ are only relevant modulo $2\pi.$  However, since the extension translation locus is constructed from lifted representations to $\widetilde{G},$ it will be important to keep track of the actual values.

\begin{proof}[Proof of Theorem \ref{thm:main}]
Let us investigate the part of the extension translation locus coming from the path of representations given by Theorem \ref{thm:rep}.  At $t=t_0,$ we begin with a reducible representation with coordinates $(\frac{\theta_0}{2\pi},0)$.  Hence, at this point, $\phi \approx 1.02$ and $\psi \approx 19.38 \approx 6\pi + .53$.  Using the notation of section \ref{sec:real}, we have that, at this point, $\alpha = \frac{5\theta_0}{2} \approx 5.11$ and $\beta = \frac{3\theta_0}{2} \approx 3.07.$
As $t \rightarrow t_1,$ lemma \ref{lem:mono} tells us that $m(t)$ increases monotonically to 2, and hence $\phi$ decreases monotonically to 0. Similarly, $\ell(t)$ also increases monotonically to 2 so that $\psi$ decreases monotonically to $6\pi.$ Therefore, the limiting parabolic point in the translation extension locus is $(6,0),$ and in fact there is a path in the translation extension locus connecting $(\frac{\theta_0}{2\pi},0)$ and $(6,0)$ which is contained within $\left\lbrace (\mu^*,\lambda^*)\in\mathbf{R}^2:0\leq \mu^* \leq \frac{\theta_0}{2\pi}\right\rbrace$ (see Figure \ref{fig:et}). By the symmetries of the translation extension locus (Theorem \ref{thm:el}), there is also a continuous path connecting the points $(1-\frac{\theta_0}{2\pi},0)$ and $(1,-6)$ contained within $\left\lbrace (\mu^*,\lambda^*)\in\mathbf{R}^2:1-\frac{\theta_0}{2\pi} \leq \mu^* \leq 1\right\rbrace.$ It is easy to see that all lines through the origin of slope greater than -6 will intersect one of these paths.  By construction, these paths consist of elliptic points, and so, by Theorem \ref{thm:int}, the Dehn filling $M(r)$ for $r\in(-\infty, 6)$ will be orderable whenever it is irreducible. As the exceptional slopes of the knot $K,$ as given in Problem 1.77 of \cite{KirbyList} all lie outside this interval, all Dehn fillings within the interval are hyberbolic and hence irreducible.  Therefore, they are all orderable.
\end{proof}

\begin{figure}
\centering
\includegraphics[width=.5\textwidth]{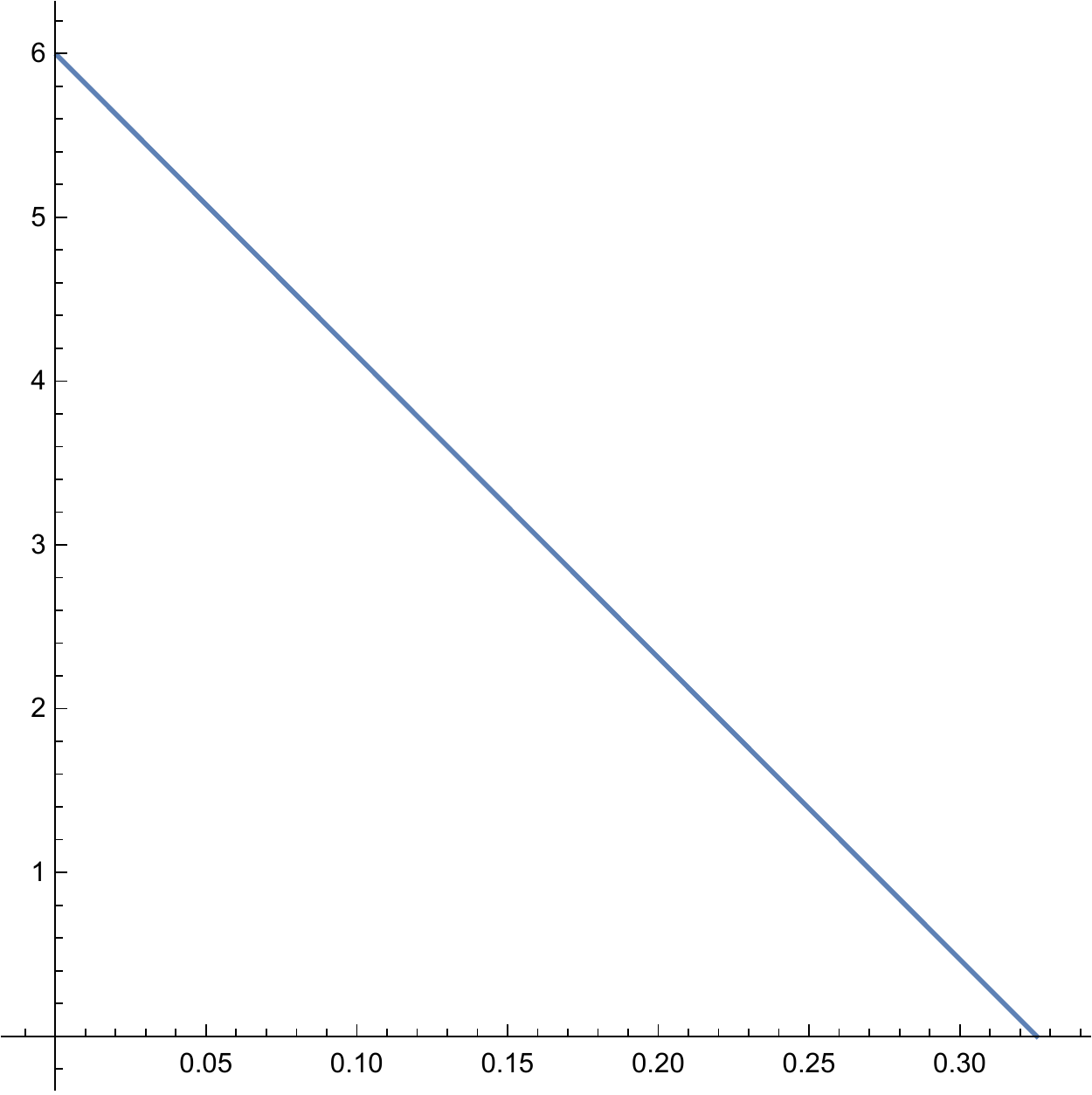}
\caption{The translation extension locus for the 1-parameter family of representations constructed above.}
\label{fig:et}
\end{figure}

\begin{proof}[Remark.]\let\qed\relax
Figure \ref{fig:et} shows a plot of the part of the translation extension locus corresponding to the path in Theorem \ref{thm:rep}, computed using Mathematica \cite{WM}. One sees that the curve in the extension translation locus actually remains within the first quadrant and resembles a straight line (as did the analogous curve in \cite{CD}.  However, explicit computation shows that it is not actually a line. Indeed, figure \ref{fig:slp} shows how the slope of this curve varies with the parameter $t.$  It is interesting to note that it ranges between -18.4 to -18.5, suggesting that the meridional ($-19\phi$) term  in $\frac{-19  \phi + \psi}{\pi}$ is dominant, with the ``deviation" coming from $\psi$ contributing less than 1 to the slope.  Indeed, the joint monotonicity of $m(t)$ and $\ell(t)$, as given by Lemma \ref{lem:mono} essentially shows that the total ``deviation" of the endpoint from that of a line of slope -19 must be less than one, as the total displacement of $\psi$ must be less than one.

\begin{figure}
\centering
\includegraphics[width=.75\textwidth]{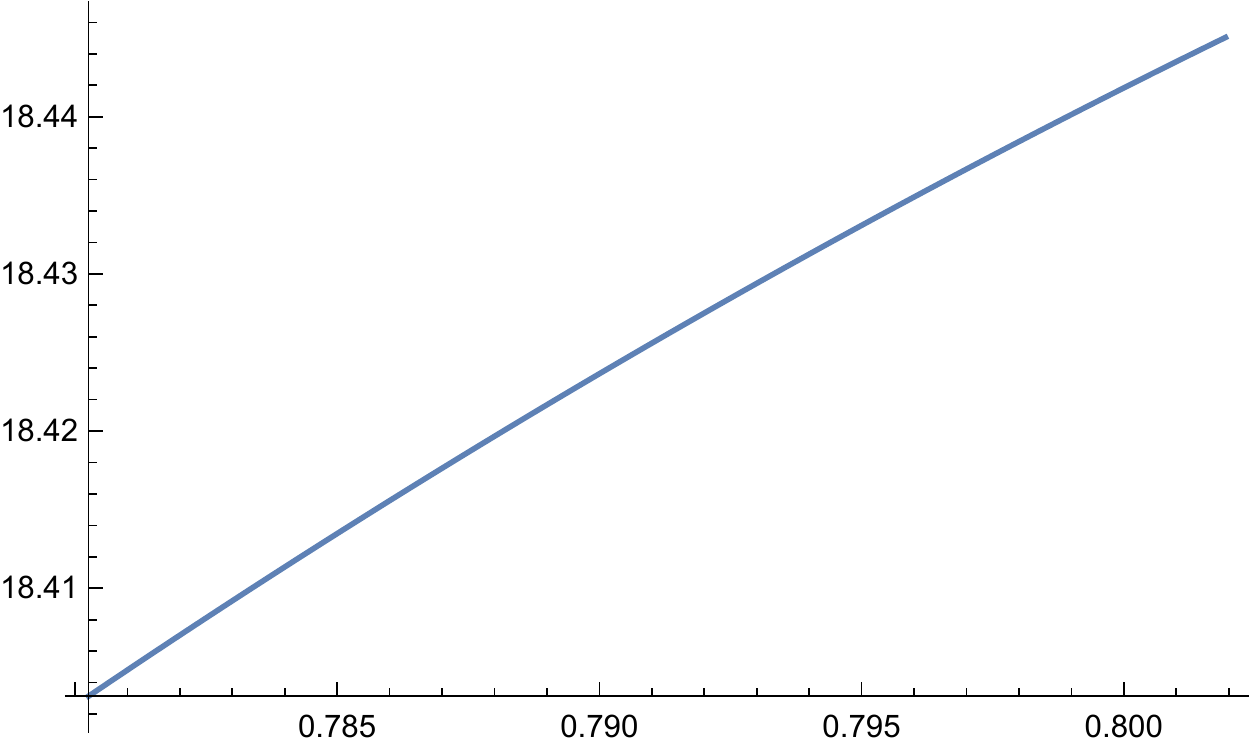}
\caption{The slope of the curve of the translation extension locus constructed in the proof of Theorem \ref{thm:main}, as a function of $t.$}
\label{fig:slp}
\end{figure}

Also using Mathematica, one can plot the translation extension locus of this knot coming from all roots of the Alexander polynomial (hence, giving a picture of two fundamental domains of the entire locus).  This is shown in Figure \ref{fig:etF}, and it agrees with the corresponding diagram obtained by Culler and Dunfield's (see Figure 3 of \cite{CD}
; the reason for the vertical reflection is most likely a difference in convention for the orientation of the meridian-longitude pair).
\end{proof}

\begin{figure}
\centering
\includegraphics[width=.75\textwidth]{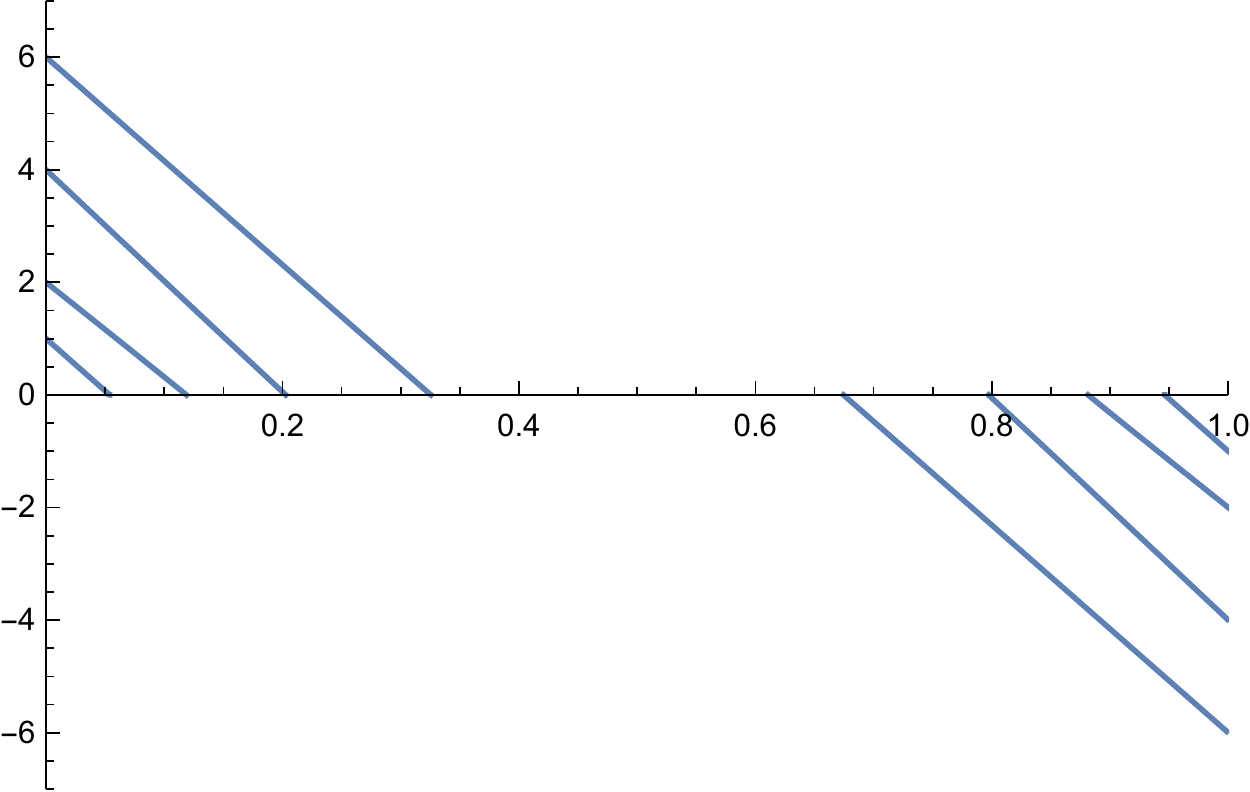}
\caption{A (double) fundamental domain of the translation extension locus for the $(-2,3,7)$-pretzel knot}
\label{fig:etF}
\end{figure}

\section{Generalizations}\label{sec:gen}

\subsection{A Family of Twisted Torus Knots}

We now consider the family of twisted torus knots $T^1_{3,3k+2}$ for $k\geq 1,$ which contains the $(-2,3,7)$-pretzel knot as the special case $k=1.$  By Lemma \ref{lem:genpres}, the fundamental group of the knot exterior has presentation:
\[
\pi_{3,3k+2}^1 \coloneqq \left\langle a, b \left| a^2 b^{-k}a^2 = b^{k+1} a  b^{k+1} \right. \right\rangle
\]

As in section \ref{sec:reps}, we see that if a representation $\rho: \pi_{3,3k+2}^1 \to \SL_2(\mathbf{R})$ is such that $\rho(a)=A$ and $\rho(b)=B,$ then using Lemma \ref{lem:chen}, we see that:

\begin{align*}
A^2B^{-k}A^2 &= \tr(A^2B^{-k})A^2 - B^k \\
&= \tr(A^2B^{-k})(tA-I) - (\omega_{k}(s)B-\omega_{k-1}(s)I) \numberthis \label{LHS}
\end{align*}
where $t=\tr(A)$, $s=\tr(B)$, and $r=\tr(AB).$  Similarly:
\begin{align*}
B^{k+1}AB^{k+1} &= \tr(B^{k+1}A)B^{k+1} - A^{-1} \\
&= \tr(\omega_{k+1}(s)B-\omega_{k}(s)I)B^{k+1} - (tI-A) \\
&= (r\omega_{k+1}(s)-t\omega_{k}(s))(\omega_{k+1}(s)B-\omega_{k}(s)I)-tI+A \numberthis \label{RHS}
\end{align*}
Equating (\ref{LHS}) and (\ref{RHS}), we see by Lemma \ref{lem:chen}\ref{linind}, if  $\rho$ is irreducible then:
\begin{equation}\label{gprecond}
\begin{cases}
\tr(A^2B^{-k}) = \frac{1}{t} \\
-\omega_{k}(s) = (r\omega_{k+1}(s)-t\omega_k(s))\\
-\tr(A^2B^{-k})+\omega_{k-1}(s) = -(r\omega_{k+1}(s)-t\omega_k(s))\omega_k(s)-t
\end{cases}
\end{equation}
From this we readily find that the following must hold:
\begin{equation}\label{gencond}
\begin{cases}
t-\frac{1}{t} = -\omega_{k-1}(s)+\frac{\omega_k(s)^2}{\omega_{k+1}(s)}\\
r=\frac{t\omega_k(s)}{\omega_{k+1}(s)}-\frac{\omega_k(s)}{\omega_{k+1}(s)^2}
\end{cases}
\end{equation}
Since the irreducible characters are one-dimensional, it follows that these must be all the relations.  Notice also that, as long as $\omega_{k+1}(s)\neq 0$, each value of $s$ corresponds to exactly two values of $t$ (each being the negative reciprocal of the other) and hence two values of $r$.

We may write the traces of the images of the meridian and (surface-framed) longitude in terms of these parameters.  By Lemma \ref{lem:genpres}, $\mu = a^{-1}b^{k+1}$ and so:
\begin{align*}
\tr(\rho(\mu)) &= \tr(A^{-1}B^{k+1})\\
&= \tr((tI-A)B^{k+1})\\
&=t(s\omega_{k+1}(s)-2\omega_k(s))-(r\omega_{k+1}(s)-t\omega_k(s))\\
&=t(s\omega_{k+1}(s)-2\omega_k(s))+\frac{\omega_k(s)}{\omega_{k+1}(s)}
\end{align*}
where the condition (\ref{gencond}) was used in the last step.

Similarly, by Lemma \ref{lem:genpres}, we know that $\sigma = ab^{-k}a^2b^{-k}a^2$ so that:
\begin{align*}
\tr(\rho(\sigma)) &= \tr(AB^{-k}A^2B^{-k}A^2) \\
&=  \tr(AB^{-k}(\tr(A^2B^{-k})A^2-B^k)) \\
&=\tr(A^2B^{-k})\tr(AB^{-k}A^2)-\tr(A) \\
&=\tr(A^2B^{-k})\tr((\tr(AB^{-k})A-B^k)A)-t \\
&=\tr(A^2B^{-k})(\tr(AB^{-k})\tr(A^2)-\tr(B^kA))-t \\
&=\tr(A^2B^{-k})(\omega_{-k}(s)r-\omega_{-k-1}(s)t)(t^2-2)-(\omega_k(s)r-\omega_{k-1}(s)t))-t \\
&=\frac{1}{t}(\omega_{k+1}(s)t-\omega_{k}(s)r)(t^2-2)-(\omega_k(s)r-\omega_{k-1}(s)t))-t
\end{align*}
where the condition (\ref{gprecond}) was used in the last step.

\subsection{Empirical Observations}
Using the calculations of the previous section, one can compute the translation extension locus corresponding to the twisted torus knot $T_{3,3k+2}^1$ for any value of $k$.  Indeed, by \cite{Mor}, we have an explicit formula for the Alexander polynomial of $T_{3,3k+2}^1$, from which the roots may be approximated.  Moreover, for each deformation coming from  a root of the Alexander polynomial, we can verify that equation (\ref{condSU}) holds (or the analogue with the roles of $s$ and $t$ reversed), hence confirming that these come from $\SL_2(\mathbf{R})$ representations.  Figure \ref{fig:ELs} shows the graphs of these extension translation loci for $k=2,3,4,$ produced using Mathematica.

\begin{figure}
\centering
\begin{subfigure}{.3\textwidth}
\includegraphics[width=\textwidth]{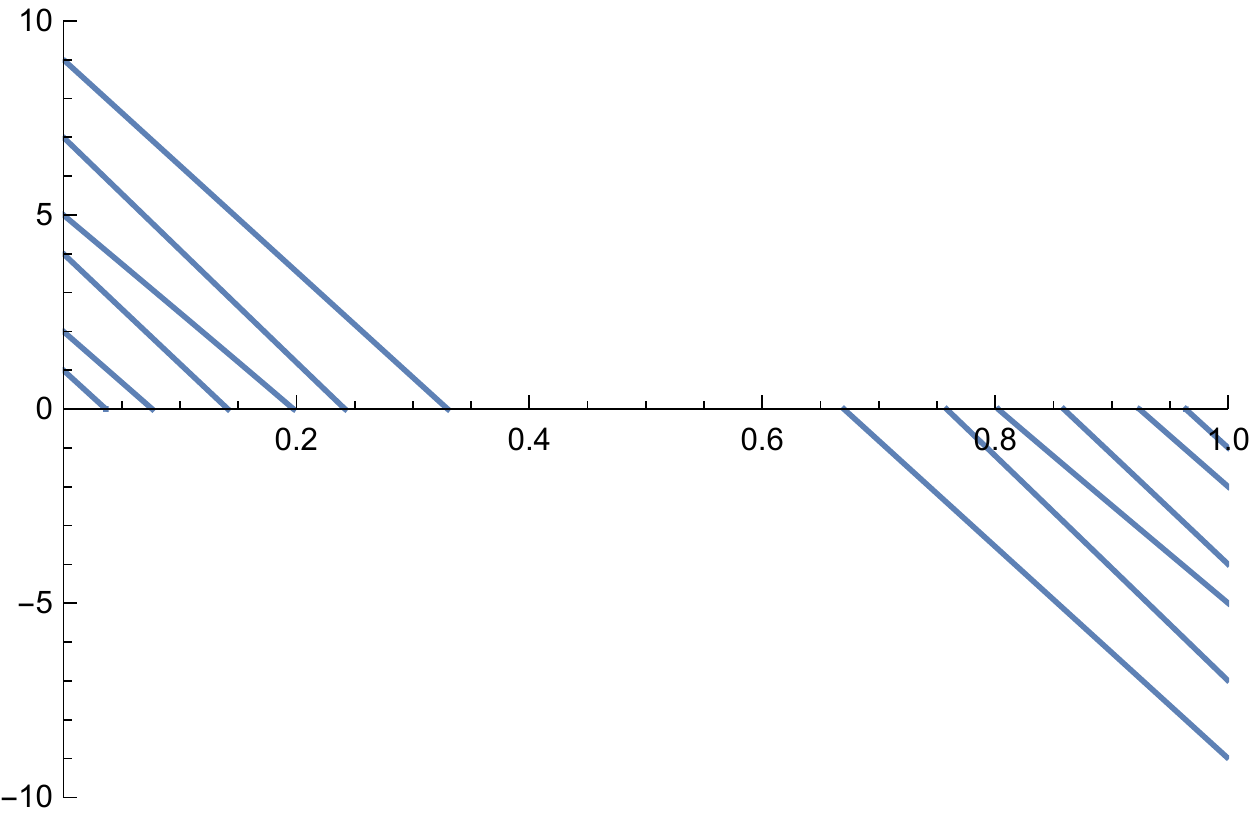}
\caption{$k=2$}
\end{subfigure}
\begin{subfigure}{.3\textwidth}
\includegraphics[width=\textwidth]{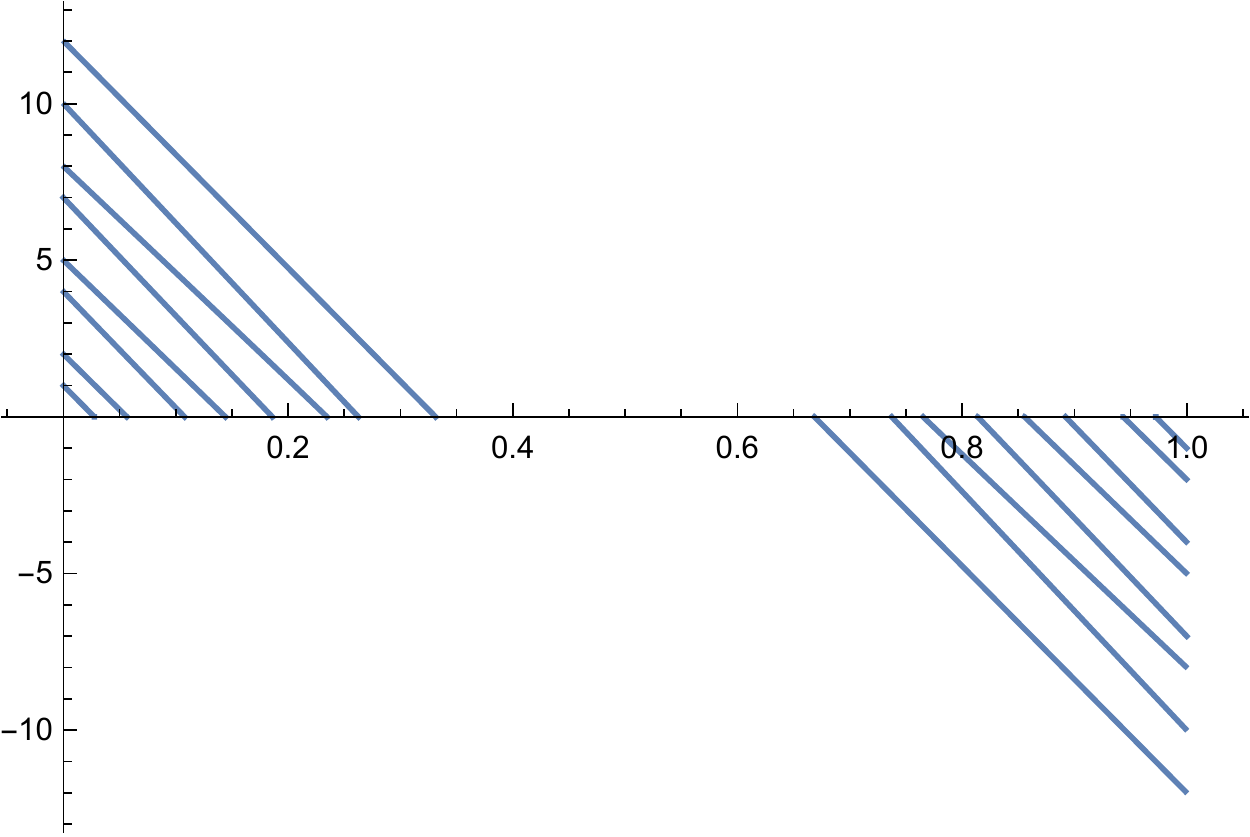}
\caption{$k=3$}
\end{subfigure}
\begin{subfigure}{.3\textwidth}
\includegraphics[width=\textwidth]{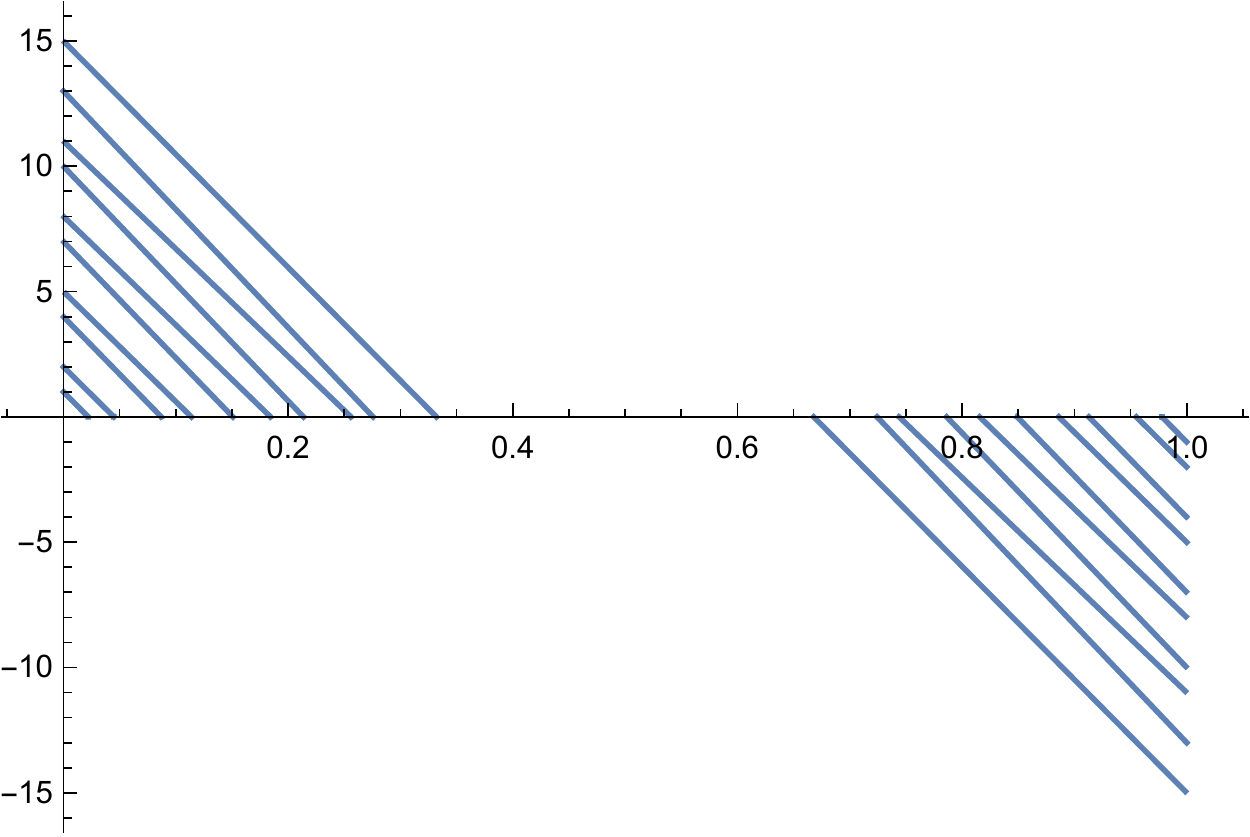}
\caption{$k=4$}
\end{subfigure}
\caption{Double fundamental domains of extension translation loci for the knots $T_{3,3k+2}^1$ for $k=2,3,4.$}
\label{fig:ELs}
\end{figure}

From these we can make the following observations:
\begin{enumerate}[label=(\roman*)]
\item The translation extension locus corresponding to $T_{3,3k+2}^1$ seems to consist of several nearly parallel ``almost lines", each with a ``slope" approximately $3(3k+2)+4.$ \label{conj:line}
\item The ``lines" corresponding to roots of the Alexander polynomial with argument in $(0,\pi)$ seem to lie above the horizontal axis (hence, by the symmetries of the translation extension locus, those with argument in $(\pi,2\pi)$ lie below the axis). \label{conj:sep}
\item The longest ``line" seems to correspond to a root of the Alexander polynomial with argument approximately $\frac{2\pi}{3}.$ \label{conj:third}
\item The maximum height of the translation extension locus seems to be achieved by the point $(0,3k+3)$ \label{conj:height}
\end{enumerate}

\ref{conj:line} was also noticed by Culler and Dunfield in \cite{CD}, who asked if translation extension loci for all twisted torus knots (and also Berge knots) have this form.

Notice that \ref{conj:height} would imply that all surgery slopes in the interval $(-\infty,3k+3)$ yield orderable manifolds (as long as they are irreducible).  On the other hand, notice that the Seifert genus of $T_{3,3k+2}^1$ is $3k+2.$  By the result of \cite{OSz}, the non-L-space surgeries are those in the interval $(-\infty,6k+3).$  Moreover, Tran has shown that surgeries in that interval do in fact yield non-orderable manifolds \cite{Tran}.  Hence, it seems that this method of constructing left-orderings leaves out the slopes in the interval $[3k+3,6k+3)$ as still unconfirmed vis-\`{a}-vis Conjecture \ref{conj:LS}.

Moreover, \ref{conj:height} is roughly a consequence of \ref{conj:line} and \ref{conj:third} since if those two hold, then one would expect the longest ``line" which starts near $(\frac{1}{3},0)$ to reach the vertical axis near $(0,3k+2+\frac{4}{3})$.  The nearest point on the integer lattice is $(0,3k+3)$ (notice that a representation corresponding to such a point is parabolic, and parabolic elements of $\tilde{G}$ must have integer translation).

More precisely, suppose the analogue of Lemma \ref{lem:mono} holds for an appropriate interval (in particular, an interval corresponding to one of the ``lines").  Using the fact that points on the translation extension locus have the form $(\frac{\phi}{\pi},\frac{-(3(3k+2)+4)  \phi + \psi}{\pi})$, one sees that if the ``line" has one endpoint at $(x,0)$, then $\psi=(3(3k+2)+4)\pi x$ at that endpoint.  At the other endpoint, $\phi=0,$ so that the endpoint will be $(0,\frac{\psi}{\pi}),$ where $\psi \geq ((3(3k+2)+4)x-1)\pi,$ by monotonicity.  Now if \ref{conj:third} holds, the longest ``line" has one endpoint at $\approx (1/3,0)$ so that the maximal height is at least $3k+2+1/3$ (and similarly, monotonicity implies this height will be at most $3k+4+1/3.$  As mentioned above, the height must be an integer, and so obtain the maximal height either $3k+3$ or $3k+4.$

For $k\geq2,$ it is not the case that the meridional and longitudinal traces are monotonic on the same intervals.  Nevertheless, for the examples computed above, an analogue of Lemma \ref{lem:mono} does appear to hold for all the intervals corresponding to ``lines" in the translation extension locus.

\printbibliography

\end{document}